\documentclass[a4paper]{amsart}
\usepackage{enumerate,graphicx}
\theoremstyle{plain}
\newtheorem{thm}{Theorem}[section]
\newtheorem{rem}[thm]{Remark}
\newtheorem{pro}[thm]{Proposition}
\newtheorem{lem}[thm]{Lemma}
\newtheorem{cor}[thm]{Corollary}
\newtheorem{df}[thm]{Definition}
\newtheorem{exa}[thm]{Example}
\newtheorem{conj}[thm]{Conjecture}

\def\defin#1{{\it #1}}
\def\podloga#1{\left\lfloor #1\right\rfloor}
\def\sufit#1{\left\lceil #1\right\rceil}
\def\subP#1{P^{(#1)}}
\DeclareMathOperator{\m}{m}
\def\T{$^*$}
\def\maxN{9 }

\title{On path sequences of graphs}
\author{S\l{}awomir Bakalarski}
\address[S. Bakalarski]{Institute of Computer Science and Computational Mathematics\\
Faculty of Mathematics and Computer Science\\
Jagiellonian University\\
\L{}ojasiewicza 6\\
30-348, Krak\'{o}w}
\email{Slawomir.Bakalarski@uj.edu.pl}
\author{Jakub Zygad\l{}o}
\address[J. Zygad\l{}o]{Institute of Computer Science and Computational Mathematics\\
Faculty of Mathematics and Computer Science\\
Jagiellonian University\\
\L{}ojasiewicza 6\\
30-348, Krak\'{o}w}
\email{Jakub.Zygadlo@ii.uj.edu.pl}
\keywords{k-path vertex cover, path sequence, list for small graphs}
\subjclass[2010]{05C38, 68R10}

\begin{document}
\begin{abstract}
A subset $S$ of vertices of a graph $G=(V,E)$ is called a $k$-path vertex cover if every path on $k$ vertices in $G$ contains at least one vertex from $S$. Denote by $\psi_k(G)$ the minimum cardinality of a $k$-path vertex cover in $G$ and form a sequence $\psi(G)=(\psi_1(G),\psi_2(G),\ldots,\psi_{|V|}(G))$, called the path sequence of $G$. In this paper we prove necessary and sufficient conditions for two integers to appear on fixed positions in $\psi(G)$. A complete list of all possible path sequences (with multiplicities) for small connected graphs is also given.
\end{abstract}
\maketitle

\section{Introduction}\label{secIntro}
Let $G$ be a graph and let $k$ be a positive integer. Following \cite{BKKS}, define a \defin{$k$-path vertex cover} ($k$-PVC for short) of $G$ as a subset $S$ of vertices of $G$ such that every path on $k$ vertices in $G$ has at least one vertex in common with $S$. A $k$-PVC is called \defin{minimum} if it has minimum cardinality among all $k$-path vertex covers of $G$. This minimum cardinality is denoted by $\psi_k(G)$ and called a \defin{$k$-path number} of $G$.
Path numbers generalize some well-known problems from graph theory, for example the cardinality of a minimum vertex cover of a graph $G$ equals $\psi_2(G)$, dissociation number of $G$ is equal to $|V|-\psi_3(G)$ (see \cite{BKKS}, \cite{KKS}) and in general values of $\psi_k(G)$ are exactly cardinalities of minimum vertex covers of $k$-uniform 'path hypergraph' $H$ built from $G$ (every path $P$ on $k$ vertices in $G$ gives rise to a hyperedge in $H$ containing vertices of $P$, see \cite{BKKS}).

We introduce the following definition:

\begin{df}
Let $G=(V,E)$ be a graph on $n$ vertices. The sequence of all path numbers, namely $$\psi(G)=(\psi_1(G),\psi_2(G),\ldots,\psi_n(G))$$
will be called a \defin{path sequence} of $G$.
\end{df}

The paper is devoted to investigation of the properties of path sequences.

\section{Elementary results}\label{secEleme}

Unless otherwise stated, in the following $G$ will denote a (simple, nonempty) graph on $n$ vertices and $k$ a positive integer satisfying $1\leq k\leq n$.

For the standard notations and definitions in graph theory we refer the reader to \cite{Bollobas}; here we only recall some extensively used notations. If $G=(V,E)$ is a graph and $S\subset V$, then $G[S]$ denotes the subgraph induced by $S$. Now for $v\in V$ and $e\in E$ we denote by $G-v$ the graph $G[V\setminus\{v\}]$ and by $G-e$ the graph $(V,E\setminus\{e\})$. We will also write $|G|$ for the number of vertices in $G$. By $P_n$, $C_n$ and $K_n$ we denote a path, a cycle and a complete graph on $n$ vertices respectively. A complete bipartite graph with partitions of size $a$ and $b$ will be denoted by $K_{a,b}$. The symbol $\simeq$ denotes graph isomorphism and by 'disjoint graphs' we mean vertex disjoint graphs. For a vertex $v$ of $G$ we denote by $d(v)$ the degree of $v$ and by $N(v)$ the neighbourhood of $v$ (the set of all vertices adjacent to $v$).

Let us first note that from the definition of path numbers one immediately gets $\psi_1(G)=n$, $\psi_k(G)\leq n-k+1$ (an arbitrary subset of $n-k+1$ vertices is a $k$-PVC) and that a path sequence is non-increasing, i.e. $\psi_1(G)\geq\psi_2(G)\geq\psi_3(G)\geq\ldots\geq\psi_n(G)\geq 0$.
An easy calculation gives path numbers for paths, cycles and complete graphs, namely
$\psi_k(P_n)=\podloga{\frac{n}{k}}$, $\psi_k(C_n)=\sufit{\frac{n}{k}}$ and $\psi_k(K_n)=n-k+1$ (see \cite{BJKST}).
We present values for complete bipartite graphs below.

\begin{pro}
Let $1\leq k\leq a+b$. Then:
$$\psi_k(K_{a,b})=\left\{\begin{array}{ll}a+b &\text{if }k=1,\\
\min\{a,b\}-\podloga{\frac{k}{2}}+1 &\text{for }1<k\leq 2\min\{a,b\}+1,\\
0 &\text{otherwise}.\end{array}\right.$$
\begin{proof}
Let us write $A$ and $B$ for partitions of $K_{a,b}$ with $|A|=a\leq b=|B|$. The case $k=1$ is clear. Assume that $1<k\leq a+b$ and take $p=a-\podloga{\frac{k}{2}}+1$. Since any path in $K_{a,b}$ alternates between $A$ and $B$, a path on $k$ vertices must have at least $\podloga{\frac{k}{2}}$ vertices in common with $A$. It follows that for $k>2a+1$ there is no path on $k$ vertices in $K_{a,b}$ and so $\psi_k(K_{a,b})=0$. So let $k\leq 2a+1$ and note that from the above reasoning an arbitrary set of $p$ vertices in $A$ is a $k$-PVC and consequently $\psi_k(K_{a,b})\leq p$. 

Now let $T$ be a subset of $A\cup B$ and $|T|=p-1$. To show that $\psi_k(K_{a,b})>p-1$ we will build a path on $k$ vertices 
disjoint from $T$. It suffices to find an arbitrary set of $\podloga{\frac{k}{2}}$ vertices in 
$A\setminus T$ and $\sufit{\frac{k}{2}}$ vertices in $B\setminus T$ (or vice versa). Note that there are at least 
$a-p+1=\podloga{\frac{k}{2}}$ vertices in $A\setminus T$ and also at least $b-p+1\geq\podloga{\frac{k}{2}}$ vertices in 
$B\setminus T$, since $b\geq a$. If $b>a$, then $b-p+1\geq\podloga{\frac{k}{2}}+1\geq\sufit{\frac{k}{2}}$ and the result follows. 
So we can assume that $a=b$. If $|A\setminus T|>\podloga{\frac{k}{2}}$, then $|A\setminus T|\geq\sufit{\frac{k}{2}}$ and we are 
done since $|B\setminus T|\geq\podloga{\frac{k}{2}}$. By symmetry, the only case left is $|A|=|B|$, $|A\setminus T|=|B\setminus 
T|=\podloga{\frac{k}{2}}$. But if $|A\setminus T|=\podloga{\frac{k}{2}}$, then $T\subset A$ and $B\cap T=\emptyset$, so 
$|B\setminus T|=|B|$. It follows that $|A\setminus T|=
|B\setminus T|=|B|=|A|$ and so $T=\emptyset$; consequently $p=1$ and $a=\podloga{\frac{k}{2}}$, so $k=2a$ or $k=2a+1$. It is 
easily verified that $\psi_{2a}(K_{a,a})=1$ and the value agrees with the formula given in the proposition; the case $k=2a+1$ is impossible since $k\leq a+b=2a$.
\end{proof}
\end{pro}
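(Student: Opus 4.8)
The plan is to fix the convention $a\le b$ (so $\min\{a,b\}=a$), write $A$, $B$ for the two colour classes of $K_{a,b}$, and split according to the three cases in the statement. The one structural fact I will lean on throughout is that every path of $K_{a,b}$ alternates between $A$ and $B$, so a subpath on $k$ vertices meets $A$ in at least $\podloga{k/2}$ and at most $\sufit{k/2}$ vertices (and symmetrically for $B$); conversely, any choice of $m$ vertices of $A$ and $m'$ of $B$ with $|m-m'|\le 1$ can be threaded into a path on $m+m'$ vertices. The case $k=1$ is immediate from $\psi_1(G)=|V|$. When $k\ge 2a+2$ we have $\podloga{k/2}\ge a+1>|A|$, so $K_{a,b}$ contains no $P_k$ and the empty set is a $k$-PVC, giving $\psi_k(K_{a,b})=0$; this disposes of the ``otherwise'' branch.

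It remains to treat $2\le k\le 2a+1$ and prove $\psi_k(K_{a,b})=p$, where $p:=a-\podloga{k/2}+1$. For the upper bound $\psi_k\le p$ I would take any $p$ vertices of $A$ as a candidate cover $S$: then $|A\setminus S|=\podloga{k/2}-1$, too few vertices of $A$ for any $P_k$, so every $P_k$ uses a vertex of $S$.

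For the lower bound $\psi_k\ge p$ I would show that no set $T$ with $|T|\le p-1$ is a $k$-PVC, by exhibiting a $P_k$ disjoint from $T$. By the structural fact it suffices to find $\podloga{k/2}$ vertices in one of $A\setminus T$, $B\setminus T$ and $\sufit{k/2}$ in the other. Counting, $|A\setminus T|\ge a-(p-1)=\podloga{k/2}$ and $|B\setminus T|\ge b-(p-1)\ge\podloga{k/2}$ as well. If $k$ is even this already finishes it, since $\sufit{k/2}=\podloga{k/2}$. If $k$ is odd, I need one side to have $\podloga{k/2}+1$ available vertices: if $b>a$, then $|B\setminus T|\ge b-(p-1)\ge\podloga{k/2}+1$ and I take $B$ for the larger side; if $b=a$, the only obstruction is $|A\setminus T|=|B\setminus T|=\podloga{k/2}$, which forces $|T\cap A|=|T\cap B|=p-1$, hence $|T|=2(p-1)\le p-1$, i.e. $p=1$; but $p=1$ means $a=\podloga{k/2}$, so $k\in\{2a,2a+1\}$, and with $k$ odd this gives $k=2a+1=a+b+1$, contradicting $k\le a+b$.

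The part I expect to be most delicate is exactly this last extremal subcase of the lower bound: when $a=b$ and $k$ is odd one cannot simply balance the two counts, and one must check that the apparent bad configuration collapses by combining $p=1$ with the global constraint $k\le a+b$ (together with a direct verification that $\psi_{2a}(K_{a,a})=1$ matches the formula). Everything else is routine bookkeeping with floors and ceilings.
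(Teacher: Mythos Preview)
Your proof is correct and follows essentially the same approach as the paper: same upper bound via a subset of $A$, same lower bound by counting $|A\setminus T|$ and $|B\setminus T|$, and the same extremal analysis when $a=b$. Your split on the parity of $k$ is a mild reorganization that slightly streamlines the endgame (you reach the contradiction $k=2a+1>a+b$ directly, whereas the paper does not split on parity and therefore must separately verify $\psi_{2a}(K_{a,a})=1$), but the underlying ideas are identical.
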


Let us also note the following useful lemma:

\begin{lem}\label{lem_v}
Let $G=(V,E)$ be a graph on $n\geq 2$  vertices, $k<n$ and $v\in V$. Then $\psi_k(G)\leq \psi_k(G-v)+1$.
Moreover, the following conditions are equivalent:
\begin{enumerate}
\item $\psi_k(G)=\psi_k(G-v)+1$,
\item $\exists S\subset V\!: S$ is a minimum $k$-PVC for $G$ and $v\in S$,
\item $\exists T\subset V\setminus\{v\}\!: T$ is a $k$-PVC for $G-v$ and $T\cup\{v\}$ is a minimum $k$-PVC for $G$.
\end{enumerate}
\begin{proof}
Let $U$ be a minimum $k$-PVC for $G-v$. Then clearly $U\cup\{v\}$ is a $k$-PVC for $G$ and so $\psi_k(G)\leq\psi_k(G-v)+1$.\\
Now suppose that (1) holds and $U$ is as above - then $U\cup\{v\}$ is a minimum $k$-PVC for $G$ and (2) follows with $S=U\cup\{v\}$. If (2) holds, then $T=S\setminus\{v\}$ is a $k$-PVC for $G-v$ (a path disjoint from $T$ in $G-v$ is disjoint from $S$ in $G$) and (3) clearly follows. Supposing that (3) holds gives $\psi_k(G)=|T|+1\geq \psi_k(G-v)+1\geq\psi_k(G)$ by the first part of the lemma, so $\psi_k(G)=\psi_k(G-v)+1$.
\end{proof}
\end{lem}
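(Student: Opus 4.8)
The plan is to establish the inequality directly and then prove the three conditions equivalent by closing the loop $(1)\Rightarrow(2)\Rightarrow(3)\Rightarrow(1)$. The one fact underlying everything is a correspondence between paths: a path on $k$ vertices in $G-v$ is precisely a path on $k$ vertices in $G$ that avoids $v$, while every path on $k$ vertices in $G$ either avoids $v$ or passes through $v$.

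First I would fix a minimum $k$-PVC $U$ of $G-v$; this is legitimate since $k<n$ forces $k\le|G-v|=n-1$, so $\psi_k(G-v)$ is defined. Then $U\cup\{v\}$ is a $k$-PVC of $G$: a $k$-vertex path of $G$ meeting neither $U$ nor $\{v\}$ would avoid $v$, hence be a $k$-vertex path of $G-v$ missing $U$, which is impossible. Therefore $\psi_k(G)\le|U\cup\{v\}|\le|U|+1=\psi_k(G-v)+1$.

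For $(1)\Rightarrow(2)$ I keep the same $U$: if $\psi_k(G)=\psi_k(G-v)+1$, then $U\cup\{v\}$ is a $k$-PVC of $G$ with $|U\cup\{v\}|\le|U|+1=\psi_k(G)$, hence a minimum one, and it contains $v$; take $S=U\cup\{v\}$. For $(2)\Rightarrow(3)$, given a minimum $k$-PVC $S$ of $G$ with $v\in S$, set $T=S\setminus\{v\}\subseteq V\setminus\{v\}$; then $T$ is a $k$-PVC of $G-v$, because a $k$-vertex path of $G-v$ is a $k$-vertex path of $G$ avoiding $v$, so it meets $S$ and thus meets $S\setminus\{v\}=T$, and moreover $T\cup\{v\}=S$ is a minimum $k$-PVC of $G$. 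For $(3)\Rightarrow(1)$, from such a $T$ we obtain $\psi_k(G)=|T\cup\{v\}|=|T|+1\ge\psi_k(G-v)+1$, using $v\notin T$ and that $T$ is a $k$-PVC of $G-v$; combined with the inequality already proved this gives $\psi_k(G)=\psi_k(G-v)+1$.

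I do not expect a real obstacle: the argument is just bookkeeping around the path-restriction correspondence. The only points needing a little care are that $v\notin T$ in condition~(3) (so that $|T\cup\{v\}|=|T|+1$ exactly) and that the hypothesis $k<n$ serves only to make $\psi_k(G-v)$ meaningful, playing no further role.
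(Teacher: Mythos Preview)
Your proof is correct and follows essentially the same route as the paper: establish the inequality via $U\cup\{v\}$ for a minimum $k$-PVC $U$ of $G-v$, then close the cycle $(1)\Rightarrow(2)\Rightarrow(3)\Rightarrow(1)$ with exactly the same choices of $S=U\cup\{v\}$ and $T=S\setminus\{v\}$. Your write-up is slightly more explicit about why $\psi_k(G-v)$ is defined and why $|T\cup\{v\}|=|T|+1$, but the argument is otherwise identical.
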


As a corollary we get:

\begin{cor}\label{cor_e}
Let $G=(V,E)$ be a graph and $e\in E$. Then $\psi_k(G)\leq \psi_k(G-e)+1$.
\begin{proof}
Let $e=uv$. Since $G-u$ is a subgraph of $G-e$, we apply previous lemma 
to obtain $\psi_k(G)\leq\psi_k(G-u)+1\leq\psi_k(G-e)+1$.
\end{proof}
\end{cor}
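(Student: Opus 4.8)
The plan is to exhibit, for any minimum $k$-PVC of $G-e$, a $k$-path vertex cover of $G$ that is larger by at most one vertex. Write $e=uv$. I would take $T$ to be a minimum $k$-PVC of $G-e$, so that $|T|=\psi_k(G-e)$, and claim that $T\cup\{u\}$ is a $k$-PVC of $G$; the desired inequality $\psi_k(G)\le|T\cup\{u\}|\le|T|+1=\psi_k(G-e)+1$ then follows at once.

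To verify the claim I would let $P$ be an arbitrary path on $k$ vertices in $G$ and split into two cases according to whether $P$ traverses the edge $e$. If $P$ does not traverse $e$, then every edge of $P$ lies in $E\setminus\{e\}$, so $P$ is a path on $k$ vertices in $G-e$; since $T$ is a $k$-PVC of $G-e$, the path $P$ meets $T$, hence it meets $T\cup\{u\}$. If $P$ does traverse $e$, then $u$ is one of its vertices, so again $P$ meets $T\cup\{u\}$. In either case $P$ has a vertex in $T\cup\{u\}$, so $T\cup\{u\}$ is indeed a $k$-PVC of $G$.

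Alternatively, and in the spirit of the preceding lemma, one can observe that $G-u=G[V\setminus\{u\}]$ is a subgraph of $G-e$ (deleting the vertex $u$ removes $e$ together with the remaining edges at $u$, so no edge of $G-u$ is absent from $G-e$). Restricting a minimum $k$-PVC of $G-e$ to $V\setminus\{u\}$ yields a $k$-PVC of $G-u$, because every path on $k$ vertices in $G-u$ is such a path in $G-e$ that avoids $u$; hence $\psi_k(G-u)\le\psi_k(G-e)$, and combining this with Lemma~\ref{lem_v} applied to the vertex $u$ gives $\psi_k(G)\le\psi_k(G-u)+1\le\psi_k(G-e)+1$. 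There is essentially no obstacle in either route: the only point needing care is the case distinction on whether a $k$-path uses $e$, and, in the second route, the elementary observation that removing a vertex deletes at least as many edges as removing one incident edge, so that $G-u$ really is a subgraph of $G-e$ (with the degenerate case $k=n$, where Lemma~\ref{lem_v} does not apply, being trivial since then $\psi_k(G)\le 1$).
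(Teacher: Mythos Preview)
Your proposal is correct, and your second route is exactly the paper's argument: observe that $G-u$ is a subgraph of $G-e$, deduce $\psi_k(G-u)\le\psi_k(G-e)$, and then invoke Lemma~\ref{lem_v}. Your first route is a self-contained alternative that bypasses Lemma~\ref{lem_v} entirely; it is just as short and arguably more transparent, since the case split on whether a $k$-path uses $e$ is the real content either way. As a bonus, you noticed the boundary case $k=n$ (where Lemma~\ref{lem_v} is stated only for $k<n$); the paper's proof silently ignores this, but your remark that $\psi_n(G)\le 1$ disposes of it.
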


The following remark shows that there are no restrictions on the structure of a minimum $k$-PVC.

\begin{rem}
For any graph $H=(W,F)$, there exists a supergraph $G$ of $H$ such that $G[W]\simeq H$ and that $W$ is a minimum $k$-PVC for $G$.
\begin{proof}
We adapt the construction from the proof of \cite{BKKS}, Theorem 1. So let us replace each vertex $v$ of $H$ with a path $P^{(v)}=v-w^{(v)}_2-\ldots-w^{(v)}_k$ on $k$ vertices (paths for different $v$ are pairwise disjoint), leaving edges $F$ intact. Call the resulting graph $G$ and note that $G[W]\simeq H$. Now any $k$-PVC for $G$ 
must contain at least one vertex from each path $P^{(v)}$, so $\psi_k(G)\geq |W|$. But $W$ is clearly a $k$-PVC, so $\psi_k(G)=|W|$.
\end{proof}
\end{rem}

\section{Two element subsequences}\label{secTwoEl}

In this section we investigate relations between two path numbers $\psi_k(G)$ and $\psi_m(G)$ for an arbitrary graph $G$.
Let us start with the following example, showing that two elements of a path sequence must satisfy some additional conditions apart from the ones given in the previous section.

\begin{exa}
There is no graph $G$ satisfying $\psi_{10}(G)=2$ and $\psi_2(G)=5$.
\begin{proof}
Suppose that such a graph $G$ exists. Let $S$ be a minimum $2$-PVC for $G$ and $v\in S$. Note that at least one vertex from any edge in $G$ belongs to $S$. Take an arbitrary path $P$ in $G$ that avoids $v$. Since for every two consecutive vertices on $P$ at least one is from $S$ and $P$ has no more than 4 vertices in common with $S$ - it follows that $P$ is a path on at most $9$ vertices. This shows that $\{v\}$ is a $10$-PVC for $G$ and so $\psi_{10}(G)\leq 1$, a contradiction.
\end{proof}
\end{exa}

These additional necessary conditions are presented in the following proposition.

\begin{pro}\label{pro_war}
Let $1\leq m<k$ and $\psi_k(G)>0$. Then $\psi_m(G)\geq \psi_k(G)+\podloga{\frac{k}{m}}-1$.
\begin{proof}
We will proceed by induction on $n$ - the number of vertices in $G$. The result clearly follows for $n\leq 2$ and also for all graphs $G$ with $\psi_k(G)=1$ (including the case $k=n$), because 
we have $\psi_m(G)\geq \psi_m(P_k)=\podloga{\frac{k}{m}}$. So we can assume that $n>k\geq 2$ and $\psi_k(G)>1$. Let $S$ be a minimum $m$-PVC for $G$ and $v\in S$. By Lemma \ref{lem_v}, we get $\psi_m(G)=\psi_m(G-v)+1$ and $\psi_k(G-v)\geq\psi_k(G)-1>0$. By the induction hypothesis $\psi_m(G-v)\geq\psi_k(G-v)+\podloga{\frac{k}{m}}-1$ and consequently $\psi_m(G)=\psi_m(G-v)+1\geq\psi_k(G-v)+\podloga{\frac{k}{m}}\geq\psi_k(G)-1+\podloga{\frac{k}{m}}$.
\end{proof}
\end{pro}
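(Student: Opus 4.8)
The plan is to prove the inequality $\psi_m(G)\geq \psi_k(G)+\podloga{\frac{k}{m}}-1$ by induction on $n=|V(G)|$, exactly in the spirit of the proof of Lemma \ref{lem_v} and the earlier example. The base cases are the small graphs and, more importantly, any graph for which $\psi_k(G)=1$: in that situation there is a path $P_k$ in $G$ (some $k$-vertex path must exist, since otherwise $\psi_k(G)=0$, contradicting $\psi_k(G)>0$), and since an $m$-PVC of $G$ restricts to an $m$-PVC of that subpath, monotonicity under taking subgraphs gives $\psi_m(G)\geq\psi_m(P_k)=\podloga{\frac{k}{m}}=\psi_k(G)+\podloga{\frac{k}{m}}-1$. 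This also cleanly disposes of the case $k=n$, since then $\psi_k(G)\leq n-k+1=1$.

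For the inductive step I would assume $n>k\geq 2$ and $\psi_k(G)\geq 2$. Pick a minimum $m$-PVC $S$ of $G$ and any vertex $v\in S$ (it is nonempty since $\psi_m(G)\geq\psi_k(G)\geq 2$). The equivalence (2)$\Rightarrow$(1) in Lemma \ref{lem_v} — applicable because $m\leq n-1$ here — gives $\psi_m(G)=\psi_m(G-v)+1$. On the other hand, Lemma \ref{lem_v} applied with parameter $k$ yields $\psi_k(G)\leq\psi_k(G-v)+1$, i.e. $\psi_k(G-v)\geq\psi_k(G)-1\geq 1>0$, so the induction hypothesis applies to $G-v$ (which has $n-1$ vertices, and still $k\leq n-1$ so the hypotheses of the proposition hold). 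That gives $\psi_m(G-v)\geq\psi_k(G-v)+\podloga{\frac{k}{m}}-1\geq\psi_k(G)-1+\podloga{\frac{k}{m}}-1$, and adding $1$ finishes the step: $\psi_m(G)\geq\psi_k(G)+\podloga{\frac{k}{m}}-1$.

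The one point that needs a little care — and I'd call it the main (mild) obstacle — is making sure the hypotheses needed to invoke Lemma \ref{lem_v} are genuinely met, in particular the strict inequality $k<n$ required there, and that the quantity $\psi_k(G-v)$ remains positive so that the induction hypothesis (which assumes $\psi_k>0$) legitimately applies. Both are handled by carving off the cases $\psi_k(G)=1$ and $k=n$ into the base step, so that in the inductive step we always have $n>k$ and $\psi_k(G)\geq 2$, from which $\psi_k(G-v)\geq 1$ follows automatically. A secondary routine check is the trivial monotonicity fact that a subgraph has $m$-path number no larger than the host graph, used for the base case; this is immediate since any $m$-PVC of $G$ intersects every $m$-vertex path of any subgraph. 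No floor-function arithmetic is actually required, which is why the argument stays short.
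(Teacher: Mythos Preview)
Your proposal is correct and follows essentially the same argument as the paper: induction on $n$, with the base case $\psi_k(G)=1$ handled via $\psi_m(G)\geq\psi_m(P_k)=\podloga{\frac{k}{m}}$, and the inductive step carried out by choosing $v$ in a minimum $m$-PVC and applying Lemma~\ref{lem_v} to both $\psi_m$ and $\psi_k$. Your write-up is slightly more explicit about verifying the hypotheses of Lemma~\ref{lem_v} and the positivity of $\psi_k(G-v)$, but the structure is identical.
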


\begin{rem}
Notice that the condition $\psi_k(G)>0$ in the above proposition cannot be omitted: take for example $G=K_{1,8}$ (a star on 9 vertices), $k=9$ and $m\in\{2,3,4\}$.
\end{rem}

\begin{rem}
Let $m<k$ and take $G$ equal to $s$ disjoint copies of $P_m$. Then clearly $\psi_m(G)=s$ and $\psi_k(G)=0$.
This shows that there exist graphs $G$ with $\psi_k(G)=0$ and an arbitrary value of $\psi_m(G)$.
\end{rem}

As the converse of Proposition \ref{pro_war} we show the following:

\begin{thm}\label{th_war}
Let $k$ be a positive integer and $1\leq m<k$. If two integers $p_k$, $p_m$ satisfy: $p_k>0$ and $p_m\geq p_k+\podloga{\frac{k}{m}}-1$, then there exists a (connected) graph $G$ such that $\psi_k(G)=p_k$ and $\psi_m(G)=p_m$.
\begin{proof}
Let $\podloga{\frac{k}{m}}=a$, i.e. $am\leq k<(a+1)m$. Take $p_k+a-1$ disjoint paths $\subP{1}, \subP{2},\ldots,\subP{p_k+a-1}$ on $2m-1$ vertices and let $\subP{i}=v^{(i)}_1-v^{(i)}_2-\ldots-v^{(i)}_{2m-1}$. Now add edges connecting vertices $v^{(i)}_x$ and $v^{(j)}_m$ for all $i\neq j$ and all $x$, i.e. $1\leq x\leq 2m-1$. Call the resulting graph $H$ and let $M=\{v^{(i)}_m: 1\leq i\leq p_k+a-1\}$ denotes the set of ``middle'' vertices of all $\subP{i}$ (see Fig. \ref{figH}). Since $\psi_m(\subP{i})=1$, it is easily seen that $M$ is a minimum $m$-PVC for $H$ and so $\psi_m(H)=|M|=p_k+a-1$.

\begin{figure}[ht]
\caption{An example graph $H$ for $m=3$ and $p_k+a-1=3$. Paths $\subP{i}$ are drawn horizontal, set $M$ is marked in black.}\label{figH}
\includegraphics[angle=90,scale=0.3]{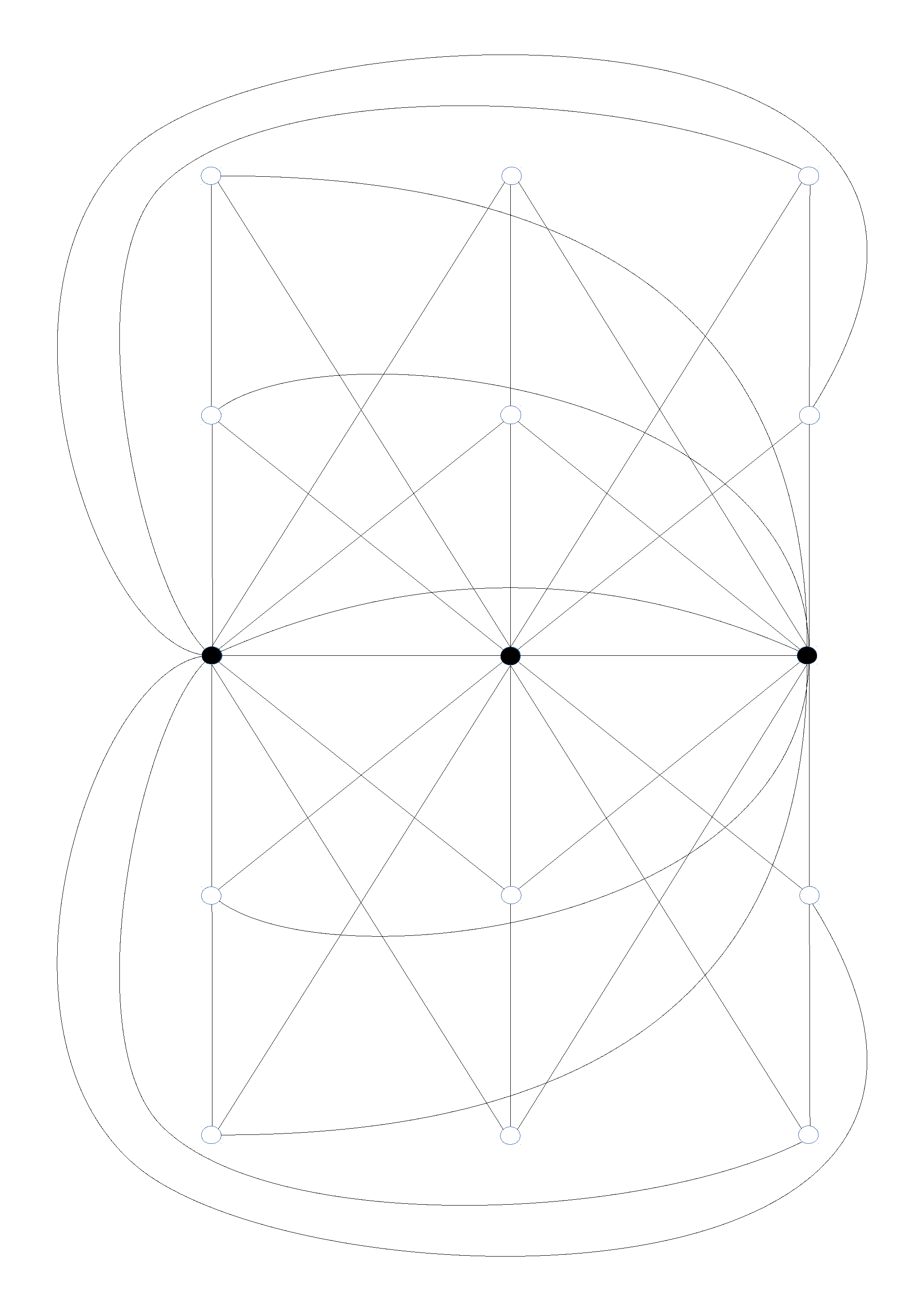}
\end{figure}
Let us now prove the following lemma concerning $H$ and $M$:

\begin{lem}\label{lem_sup} Any path on $k$ vertices in $H$ must contain at least $a$ vertices from $M$.
\begin{proof}
Suppose to the contrary that $W=w_1-\ldots-w_k$ is a path in $H$ and $|\{w_1,\ldots,w_k\}\cap M|=t<a$. Let us divide $W$ into consecutive fragments (subpaths) contained in paths $\subP{i}$, i.e. $W=W^1-\ldots-W^s$, where each $W^j$ is a (maximal) subpath of $W$ with all vertices in some fixed $\subP{i}$.
Note that it is possible for multiple $W^j$ to be subpaths of a single $\subP{i}$ and that any $W^j$ with at least $m$ vertices must contain some vertex from $M$.
Let us now define two sets of indices $j$:
$I_0=\{j: W^j\cap M=\emptyset\}$ and $I_1=\{j: W^j\cap M\neq\emptyset\}=\{j: W^j\text{ has exactly one vertex in common with }M\}$.
Clearly, the total number of $W^j$ equals $s=|I_0|+|I_1|=|I_0|+t$. Now we write $I_1$ as a disjoint sum of the following subsets:
$B=\{j: W^j\text{ has more than $m$ vertices}\}$, $U=\{j: W^j\text{ is a single vertex from }M\}$ and $R=I_1\setminus(B\cup U)=\{j\in I_1: 2\leq |W^j|\leq m\}$.
Counting the number of vertices in $W$ as a sum of the numbers of vertices in $W^j$ yields the following bound: 
\begin{align*}
|W|=& \sum_{j\in I_0\cup I_1}|W^j|=\sum_{j\in I_0\cup B\cup U\cup R}|W^j|\leq\\
\leq& |I_0|\cdot(m-1)+|B|\cdot(2m-1)+|U|\cdot 1+|R|\cdot m=\\
=& |I_0|\cdot(m-1)+|B|\cdot(2m-1)+|U|+(s-|I_0|-|B|-|U|)\cdot m=\\
=& (s+|B|-|U|)\cdot m+|U|-|I_0|-|B|
\end{align*}

We will now show the following claim: let $a<b$ be two integers such that the last (in order imposed by $W$) vertex in $W^{a}$ and the first vertex in $W^{b}$ are not in $M$. Then there exists an index $p\in U$ such that $a<p<b$. Indeed, by the construction of $H$ and $W^j$, since the last vertex in $W^{a}$ is not in $M$, the first one in $W^{a+1}$ must be in $M$. Analogously, the last vertex in $W^{b-1}$ must be in $M$. If $a+1\in U$ or $b-1\in U$, then we are done. If not, the last vertex in $W^{a+1}$ and the first one in $W^{b-1}$ are not in $M$ and we can proceed by induction on $b-a$ (the case $b-a=1$ being impossible and $b-a=2$ easily verified).

Since any $a,b\in I_0\cup B$ such that $a<b$ satisfy the hypothesis of the claim, we get $|U|\geq |I_0|+|B|-1$.
The bound for $|W|$ attains its maximum for the smallest possible $|U|$, that is $|U|=|I_0|+|B|-1$ and then we get
$|W|\leq (s-|I_0|+1)\cdot m-1=(t+1)\cdot m-1\leq am-1<k$, a contradiction that ends the proof.
\end{proof}
\end{lem}
We will show that $\psi_k(H)=p_k$. First note that by Lemma \ref{lem_sup} we get $\psi_k(H)\leq p_k$, since the set $S=\{v^{(i)}_m: 1\leq i\leq p_k\}$ is a $k$-PVC for $H$ as there are only $a-1$ vertices in $M\setminus S$.
Now let $T$ be an arbitrary set of no more than $p_k-1$ vertices from $H$. Without loss of generality we can assume that $T$ has no vertices in common with paths $\subP{1},\ldots,\subP{a}$ (recall that the number of $\subP{i}$ is $p_k+a-1$).
It is easy to observe that joining the paths $v^{(1)}_1-\ldots-v^{(1)}_{2m-1}$, $v^{(2)}_m-v^{(2)}_{m+1}-\ldots-v^{(2)}_{2m-1}, \ldots, v^{(a)}_m-v^{(a)}_{m+1}-\ldots-v^{(a)}_{2m-1}$ results in a path on $2m-1+(a-1) m= (a+1)m-1\geq k$ vertices. Consequently $T$ is not a $k$-PVC for $H$ and $\psi_k(H)>p_k-1$, so we must have $\psi_k(H)=p_k$.

Now we deal with the $m$-path number. Take $p_m-(p_k+a-1)$ (by assumption this number is non-negative) disjoint paths $Q^{(1)}, Q^{(2)},\ldots,Q^{(p_m-(p_k+a-1))}$ on $m$ vertices and let $Q^{(j)}=u^{(j)}_1-\ldots-u^{(j)}_m$. Connect all $Q^{(j)}$ to the vertex $v^{(1)}_m$ of $P^{(1)}$ by adding edges $u^{(j)}_1-v^{(1)}_m$ for all $j$. Resulting graph $G$ satisfies $\psi_k(G)=\psi_k(H)=p_k$, since $S=\{v^{(i)}_m: 1\leq i\leq p_k\}$ is a $k$-PVC for $G$. But also $\psi_m(G)=\psi_m(H)+(p_m-(p_k+a-1))=p_m$ because at least one vertex from each $P^{(i)}$ and each $Q^{(j)}$ must be included in the minimum $m$-PVC of $G$ and clearly $M\cup\{u^{(j)}_1:j=1,2,\ldots,p_m-(p_k+a-1)\}$ is a $m$-PVC for $G$.
\end{proof}
\end{thm}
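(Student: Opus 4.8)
The plan is to build an explicit connected graph $G$ realizing the pair $(p_k, p_m)$, and the natural template is a "sunflower"-style construction: take many disjoint short paths and glue them together through their middle vertices so that each short path is forced to contribute to any $m$-path cover, while long paths on $k$ vertices are forced to use several of the middle (glue) vertices. Concretely, I would take $p_k + a - 1$ disjoint copies of $P_{2m-1}$ (where $a=\podloga{\frac{k}{m}}$), and join the middle vertex $v^{(i)}_m$ of each copy to \emph{every} vertex of every other copy. This makes the set $M$ of middle vertices a minimum $m$-PVC (each $P_{2m-1}$ needs one vertex, and $M$ suffices), giving $\psi_m = p_k + a - 1$ for this intermediate graph $H$.

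The crux of the argument is controlling $\psi_k(H)$. The upper bound $\psi_k(H) \le p_k$ should follow from a combinatorial lemma stating that any $P_k$ in $H$ must pass through at least $a$ middle vertices: then deleting all but $a-1$ of the $v^{(i)}_m$ kills every $P_k$, so $p_k$ of them suffice. For the lower bound I would show that removing only $p_k-1$ vertices always leaves a $P_k$ intact — by a pigeonhole argument, some $a$ of the $p_k+a-1$ paths are untouched, and one can stitch one full $P_{2m-1}$ together with $a-1$ half-paths of length $m$ through the middle-vertex edges to get a path on $(a+1)m - 1 \ge k$ vertices. The main obstacle is proving the "$\ge a$ middle vertices" lemma rigorously: a candidate $P_k$ decomposes into maximal segments lying inside individual copies, and one must bound the total length by classifying segments (short, length $\le m$; long, length up to $2m-1$; and single middle vertices) and observing that consecutive segments that both avoid $M$ at the junction force an intervening isolated middle vertex. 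This bookkeeping — essentially showing $|U| \ge |I_0| + |B| - 1$ in some notation and then optimizing the length bound — is the delicate part and will need a careful inductive sub-claim about the structure of the path near segment boundaries.

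Once $\psi_k(H) = p_k$ and $\psi_m(H) = p_k + a - 1$ are established, raising $\psi_m$ to the target $p_m$ is routine: I would attach $p_m - (p_k + a - 1)$ extra disjoint copies of $P_m$, each hooked by an endpoint to a single fixed middle vertex $v^{(1)}_m$. Each new $P_m$ forces one more vertex into any $m$-PVC (and the obvious augmented set still works), so $\psi_m$ goes up by exactly that amount; meanwhile no new $P_k$ is created through the pendant pieces in a way that defeats the old cover $\{v^{(i)}_m : 1 \le i \le p_k\}$, so $\psi_k$ is unchanged. Connectivity is automatic from the gluing. The only points requiring care here are verifying that the pendant $P_m$'s genuinely raise $\psi_m$ and do not secretly lower $\psi_k$ — both follow because every new vertex lies within distance $m-1$ of $v^{(1)}_m \in S$, so $S$ remains a $k$-PVC.
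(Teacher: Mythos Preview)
Your proposal is correct and follows essentially the same route as the paper's own proof: the paper builds exactly this ``sunflower'' graph $H$ from $p_k+a-1$ copies of $P_{2m-1}$ with each middle vertex joined to all vertices of the other copies, proves the key lemma that every $P_k$ in $H$ meets $M$ in at least $a$ vertices via precisely the segment classification and the inequality $|U|\ge |I_0|+|B|-1$ you describe, and then appends $p_m-(p_k+a-1)$ pendant copies of $P_m$ at $v^{(1)}_m$ to raise $\psi_m$ without disturbing $\psi_k$. The only tiny slip is the phrase ``within distance $m-1$ of $v^{(1)}_m$'' (the farthest pendant vertex is at distance $m$), but the intended argument---that any path through a pendant $Q^{(j)}$ of length exceeding $m$ must pass through $v^{(1)}_m\in S$---is exactly right.
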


\section{Path sequences for small graphs}\label{secPathS}

In this section we give some properties of path sequences concerning graphs with small number of vertices. First problem which arises naturally is the question whether equality of path sequences implies graph isomorphism. This is true for graphs with at most three vertices but false in general, as shown by the proposition below.

\begin{pro}\label{pro_isomorphic}
For any $n \geq 4$ there exist (connected) graphs $G$, $H$ on $n$ vertices such that $\psi(G)=\psi(H)$ but $G$ and $H$ are not isomorphic.
\end {pro}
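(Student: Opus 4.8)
The plan is to exhibit, for every $n\geq 4$, a single explicit pair of connected non-isomorphic graphs $G_n,H_n$ on $n$ vertices having a common, and deliberately almost trivial, path sequence, namely $(n,2,1,1,0,\ldots,0)$. Fix vertices $w,x_1,\ldots,x_{n-1}$; let $H_n$ be the graph whose edges are $x_1x_2$ together with $wx_i$ for every $i\in\{1,3,4,\ldots,n-1\}$ (so $w$ is joined to every $x_i$ except $x_2$), and let $G_n$ be obtained from $H_n$ by adding the single edge $wx_2$. Then $H_n$ is a tree --- a ``spider'' with one leg of length $2$ and $n-3$ legs of length $1$ --- while in $G_n$ the set $\{w,x_1,x_2\}$ spans a triangle with $n-3$ pendant vertices attached at $w$; for $n=4$ this is simply $P_4$ versus a triangle with a pendant vertex. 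Both graphs are connected, have $n$ vertices, and are non-isomorphic, since $G_n$ contains a triangle while $H_n$ does not. It remains to check that $\psi(G_n)=\psi(H_n)$ coordinate by coordinate.

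First I would observe that in both graphs the single vertex $w$ lies on every path with at least three vertices: the midpoint of any $P_3$ must be one of $w,x_1,x_2$ (the vertices $x_3,\ldots,x_{n-1}$ have degree $1$), and every $P_3$ with midpoint $x_1$ or $x_2$ also passes through $w$. Hence $\{w\}$ is a $3$-PVC, so $\psi_3(G_n)=\psi_3(H_n)=1$, and by the monotonicity of path sequences $\psi_k\in\{0,1\}$ for all $k\geq 3$, the value being $1$ precisely when a $P_k$ is present. Next, $x_1x_2$ and a pendant edge $wx_3$ are vertex-disjoint, so no single vertex is a $2$-PVC while $\{w,x_1\}$ plainly is one; thus $\psi_2(G_n)=\psi_2(H_n)=2$. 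Finally I would bound the longest path: a path avoiding $w$ lies in a subgraph of maximum degree $1$ and so has at most two vertices, while a path through $w$ uses at most two neighbours of $w$, each prolongable by at most one further vertex (only $x_1$ and $x_2$ prolong, and into one another). A short case analysis then shows that the longest path has exactly four vertices in each graph, for instance $x_2-x_1-w-x_3$, so $\psi_4=1$ and $\psi_k=0$ for $5\leq k\leq n$. Assembling these facts yields $\psi(G_n)=\psi(H_n)=(n,2,1,1,0,\ldots,0)$, which proves the proposition.

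The verification is routine; the substantive part is choosing the pair. Since $\psi_2$ is the vertex-cover number --- that is, $|V|$ minus the independence number --- it is a rigid invariant, and the natural ways of perturbing a graph without disturbing the remaining path numbers (for example adding a chord to $P_n$) tend to change $\psi_2$ or some intermediate $\psi_k$; this is what steers one towards pairs whose path sequences are as close to trivial as possible, exactly as above. Within the verification, the only delicate point is the longest-path bound, where one must exclude a spurious $P_5$ through $w$ --- impossible because it would have to traverse the edge $x_1x_2$ twice --- together with double-checking that the construction still makes sense, and still produces the stated sequence, in the smallest case $n=4$.
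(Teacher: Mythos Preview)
Your proof is correct. The verification of the path sequence $(n,2,1,1,0,\ldots,0)$ for both graphs is sound, including the longest-path bound (the phrasing ``traverse the edge $x_1x_2$ twice'' is slightly loose---the real obstruction is that a $P_5$ of the form $u-a-w-b-v$ would force $\{a,b\}=\{x_1,x_2\}$ and then $u=v$---but the conclusion is right).

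Your approach is the same in spirit as the paper's---exhibit one explicit pair per $n$ and compute---but you choose a different pair. The paper takes $G=(C_4)_{u,n-4}$ and $H=(K_4-e)_{v,n-4}$, i.e.\ the $4$-cycle versus the diamond, each with $n-4$ pendant leaves attached at a single vertex ($v$ of degree $3$ in the diamond), yielding the common sequence $(4,2,2,1)$ for $n=4$ and $(n,2,2,1,1,0,\ldots,0)$ for $n\geq 5$. Your pair (a spider with one long leg versus a triangle with pendants) is arguably a touch simpler: one member is a tree, the sequence has one fewer nonzero entry, and the non-isomorphism is witnessed by a triangle rather than an edge count. The paper's pair, on the other hand, has a clean base case with no pendants at $n=4$ and both graphs $2$-connected there. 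Either construction settles the proposition with equal ease.
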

\begin {proof}
Let $G$ be a graph and $v$ a vertex in $G$. By $G_{v,k}$ we understand a graph obtained from $G$ by adding $k$ new vertices $\{u_1,\ldots,u_k\}$ and edges $\{vu_1,vu_2,\ldots,vu_k\}$ to $G$. Now, let $u \in V(C_4)$ and let $v \in V(K_4-e)$ be of degree $3$ and consider graphs $G=(C_4)_{u,n-4}$ and $H=(K_4-e)_{v,n-4}$. Obviously $G$ and $H$ are not isomorphic but
$$\psi(G)=\psi(H)=\left\{\begin{array}{ll}
 (4,2,2,1) & \text { if } n=4, \\
 (n,2,2,1,1,0,\ldots,0) & \text { for } n \geq 5.
\end{array}\right.$$
\end {proof}

Before going further we state the following definition:

\begin {df}
Let $(p_1,\ldots,p_n)$ be a sequence of non-negative integers. Put
$$\begin{array}{rl}
\m(p_1,\ldots,p_n):=& \text{number of non-isomorphic connected graphs } G\\
                    & \text{on } n \text{ vertices such that } \psi(G)=(p_1,\ldots,p_n).\\
  \end{array}$$
We will call this number the \defin{path multiplicity of a sequence} $(p_1,\ldots,p_n)$. A sequence with nonzero path multiplicity will be called \defin{realisable}, i.e. $(p_1,\ldots,p_n)$ is realisable if there exists a connected graph $G$ with $\psi(G)=(p_1,\ldots,p_n)$. Moreover if at least one of the graphs $G$ satisfying $\psi(G)=(p_1,\ldots,p_n)$ is a tree, a bipartite graph, etc. we will say that the sequence is realisable by a tree, a bipartite graph, etc.
\end {df}

Tables 1 and 2 give realisable sequences and their path multiplicities for connected graphs on $n=5,6$ and $7$ vertices (for smaller $n$ all values are easily found "by hand"). These numbers were generated using a computer program written by the authors - source code is available at \cite{Prog}. The lists of non-isomorphic connected graphs and trees were obtained by the Mathematica package \cite{Math} and data from the web page \cite{McKay}. Note: sequences realisable by trees are marked with \T.
\noindent
\begin{table}[htp]
\caption {Path sequences for all connected graphs on 5 vertices (9 sequences, 21 graphs) and 6 vertices (20 sequences, 112 graphs).}
\begin{minipage}{0.4\textwidth}
{\small \begin {tabular}{|r|c|}
\hline 
\hfill Sequence\hfill\ & Multiplicity \\
\hline 
\T (5,1,1,0,0) & 1 \\
\T (5,2,1,1,0) & 2 \\
\T (5,2,1,1,1) & 1 \\
   (5,2,2,1,1) & 5 \\
   (5,3,1,1,1) & 2 \\
   (5,3,2,1,1) & 2 \\
   (5,3,2,2,1) & 5 \\
   (5,3,3,2,1) & 2 \\
   (5,4,3,2,1) & 1 \\ \hline
\end {tabular}}\hfill\ 
\end{minipage}
\begin{minipage}{0.55\textwidth}
{\small \begin {tabular}{|r|c||c|c|}
 \hline
\hfill Sequence\hfill\ & Mult. & Sequence & Mult.  \\
\hline
\T (6,1,1,0,0,0) &  1 & (6,3,3,2,1,1) &  5 \\ 
\T (6,2,1,1,0,0) &  2 & (6,3,3,2,2,1) & 14 \\
\T (6,2,2,1,0,0) &  1 & (6,4,2,1,1,1) &  4 \\
\T (6,2,2,1,1,0) & 10 & (6,4,2,2,1,1) &  1 \\
\T (6,3,1,1,1,0) &  3 & (6,4,2,2,2,1) &  8 \\
   (6,3,2,1,1,0) &  3 & (6,4,3,2,1,1) &  2 \\
\T (6,3,2,1,1,1) &  9 & (6,4,3,2,2,1) &  7 \\
   (6,3,2,2,1,0) &  1 & (6,4,3,3,2,1) &  9 \\
   (6,3,2,2,1,1) & 22 & (6,4,4,3,2,1) &  3 \\
   (6,3,2,2,2,1) &  6 & (6,5,4,3,2,1) &  1 \\ \hline
\end {tabular}}\hfill\ 
\end{minipage}
\end{table}
\noindent
\begin{table}[htp]
\caption {Path sequences for all connected graphs on 7 vertices (50 sequences, 853 graphs).}
\small{\begin {tabular}{|r|c||c|c||c|c|}
 \hline
\hfill Sequence\hfill\ & Multiplicity & Sequence & Mult. & Sequence & Mult. \\
\hline
\T (7,1,1,0,0,0,0) &  1 &(7,3,3,2,2,1,0) &  1 &(7,4,4,3,2,1,1) &  6 \\
\T (7,2,1,1,0,0,0) &  2 &(7,3,3,2,2,1,1) & 87 &(7,4,4,3,2,2,1) & 24 \\
\T (7,2,2,1,0,0,0) &  1 &(7,4,1,1,1,0,0) &  3 &(7,4,4,3,3,2,1) & 36 \\
\T (7,2,2,1,1,0,0) & 16 &(7,4,2,1,1,1,0) &  6 &(7,5,3,1,1,1,1) &  3 \\
\T (7,3,1,1,1,0,0) &  4 &(7,4,2,1,1,1,1) &  3 &(7,5,3,2,1,1,1) &  4 \\ 
\T (7,3,2,1,1,0,0) &  5 &(7,4,2,2,1,1,0) &  1 &(7,5,3,2,2,1,1) &  1 \\
\T (7,3,2,1,1,1,0) & 21 &(7,4,2,2,1,1,1) & 25 &(7,5,3,2,2,2,1) & 18 \\
\T (7,3,2,1,1,1,1) &  2 &(7,4,2,2,2,1,1) & 39 &(7,5,3,3,2,1,1) &  1 \\
   (7,3,2,2,1,0,0) &  1 &(7,4,3,1,1,1,0) &  1 &(7,5,3,3,2,2,1) &  8 \\
   (7,3,2,2,1,1,0) & 39 &(7,4,3,1,1,1,1) & 12 &(7,5,3,3,3,2,1) & 22 \\
   (7,3,2,2,1,1,1) &  4 &(7,4,3,2,1,1,0) &  3 &(7,5,4,3,2,1,1) &  2 \\
   (7,3,2,2,2,1,0) &  1 &(7,4,3,2,1,1,1) & 20 &(7,5,4,3,2,2,1) &  7 \\
   (7,3,2,2,2,1,1) &  9 &(7,4,3,2,2,1,1) & 69 &(7,5,4,3,3,2,1) & 19 \\
   (7,3,3,1,1,1,0) &  3 &(7,4,3,2,2,2,1) & 81 &(7,5,4,4,3,2,1) & 15 \\
   (7,3,3,1,1,1,1) &  8 &(7,4,3,3,2,1,1) & 46 &(7,5,5,4,3,2,1) &  3 \\
   (7,3,3,2,1,1,0) & 10 &(7,4,3,3,2,2,1) &129 &(7,6,5,4,3,2,1) &  1 \\
   (7,3,3,2,1,1,1) & 10 &(7,4,3,3,3,2,1) & 20 & - & -  \\ \hline
\end {tabular}}
\end{table}

From Tables 1 and 2 we can draw some observations. First of all notice that from the basic properties of path numbers it follows that
$\m(n,n-1,n-2,\ldots,1)=1$ (realisable by $K_n$) and that $\m(n,1,1,0,\ldots,0)=1$ (realisable by $K_{1,(n-1)}$, for $n\geq 3$). However there are many other path sequences with multiplicity one, for example $\m(5,2,1,1,1)=1$.

Proposition \ref{pro_isomorphic} shows that equality of path sequences for two graphs does not imply that they are isomorphic. The tables above show that we can even have that $\psi(G)=\psi(T)$ for some tree $T$ and some non-tree graph $G$.

However if $T_1, T_2$ are trees with $n<7$ vertices, then it follows from Tables 1 and 2 (and the number of non-isomorphic trees on $n$ vertices) that $\psi(T_1)=\psi(T_2) \iff T_1 \simeq T_2$. But this is also not true in general, as the following proposition shows.

\begin{pro}
For any $n \geq 7$ there exist trees $T_1$, $T_2$ on $n$ vertices such that $\psi(T_1)=\psi(T_2)$ but $T_1$ and $T_2$ are not isomorphic.
\begin{proof}
Assume first that $n=7$ and take the following trees:

\noindent
\begin {minipage}[t]{0.48 \textwidth}
\includegraphics [angle=90,scale=0.2] {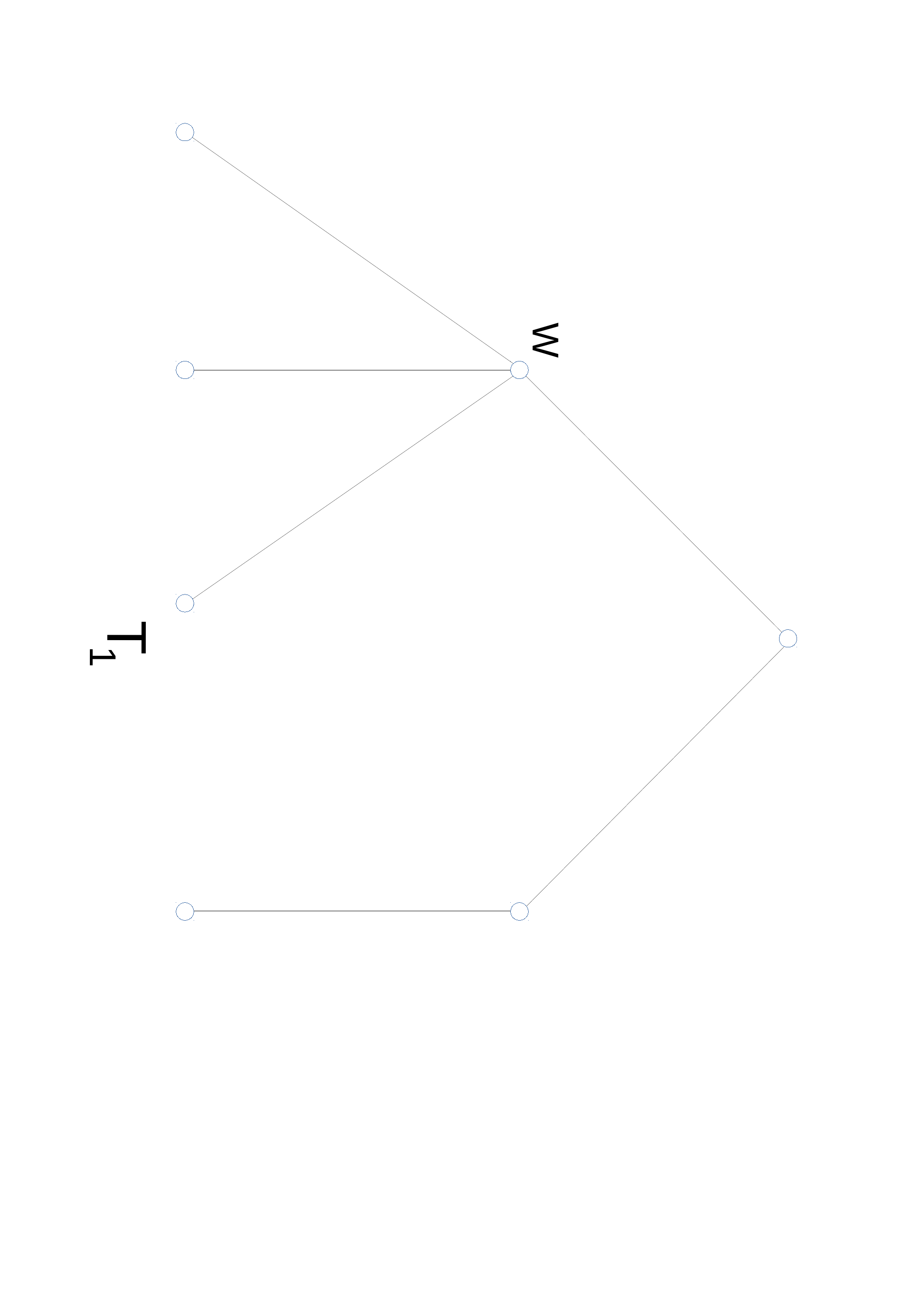}
\end {minipage}
\begin {minipage}[t]{0.48 \textwidth}
\includegraphics [angle=90,scale=0.2] {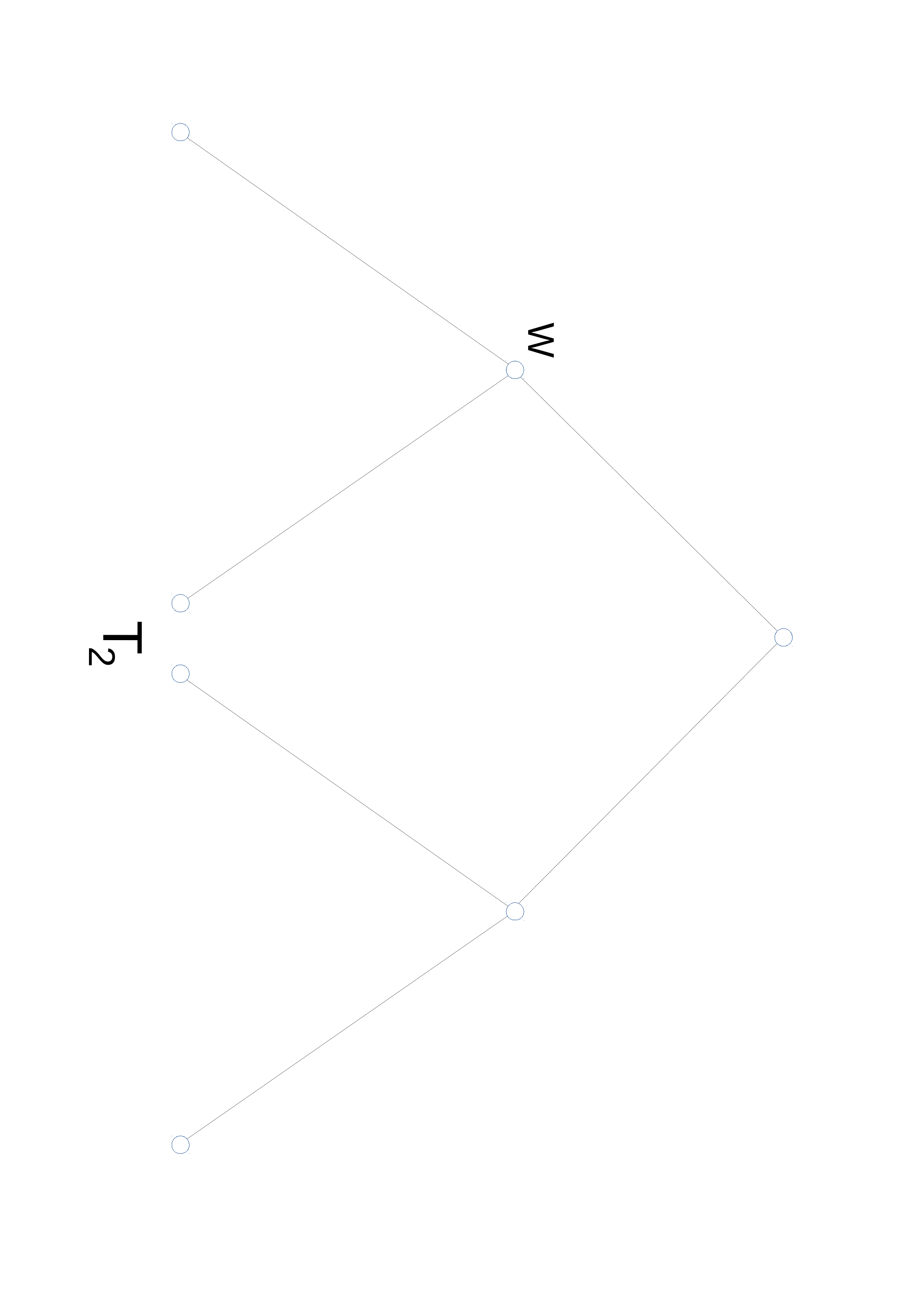}
\end {minipage}

\noindent
that are clearly not isomorphic and have path sequences equal to $(7,2,2,1,1,0,0)$.
For $n>7$ it suffices to attach additional vertices to $w$ (consider $(T_1)_{w,n-7}$ and $(T_2)_{w,n-7}$ in notations of Proposition \ref{pro_isomorphic}) to obtain non-isomorphic trees with path sequences $(n,2,2,1,1,0,\ldots,0)$.
\end{proof}
\end{pro}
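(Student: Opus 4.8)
The plan is to bypass the pictured $7$-vertex trees in favour of one uniform family. For integers $a,b\ge 1$, let $D(a,b)$ denote the tree on $a+b+3$ vertices obtained from a path $r_1-r_2-r_3$ by attaching $a$ pendant leaves at $r_1$ and $b$ pendant leaves at $r_3$ (equivalently, the double star $S(a,b)$ with its central edge subdivided once). For $n\ge 7$ I would take $T_1=D(1,n-4)$ and $T_2=D(2,n-5)$: both have exactly $n$ vertices, and they are non-isomorphic because $T_1$ has a vertex of degree $n-3$ (namely $r_3$), while the maximum degree of $T_2$ equals $n-4$ when $n\ge 7$. For $n=7$ this gives the pair $D(1,3)$, $D(2,2)$.

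The whole statement then reduces to the single claim that $\psi\big(D(a,b)\big)=(a+b+3,\,2,\,2,\,1,\,1,\,0,\ldots,0)$ whenever $a,b\ge 1$ and $\max\{a,b\}\ge 2$, since both $D(1,n-4)$ and $D(2,n-5)$ satisfy these hypotheses for $n\ge 7$. To prove the claim I would go through the path numbers in order. $\psi_1$ is the vertex count. For $\psi_2$: the set $\{r_1,r_3\}$ is a vertex cover, while a pendant edge at $r_1$ is disjoint from a pendant edge at $r_3$, so $\psi_2=2$. For $\psi_3$: deleting $r_1$ and $r_3$ leaves an edgeless graph, so $\{r_1,r_3\}$ is a $3$-PVC; and if $b\ge 2$ (the case $a\ge 2$ being symmetric), the two $3$-vertex paths $q_1r_3q_2$ and $\ell r_1r_2$ (with $q_1,q_2$ leaves of $r_3$ and $\ell$ a leaf of $r_1$) are vertex-disjoint, forcing $\psi_3=2$. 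For $\psi_4$ and $\psi_5$ the key structural fact is that $D(a,b)-r_2$ is a disjoint union of two stars and hence contains no $P_4$; therefore every path on $4$ or $5$ vertices in $D(a,b)$ passes through $r_2$, so $\{r_2\}$ is a minimum $4$-PVC and a minimum $5$-PVC (such paths exist, e.g.\ $\ell r_1r_2r_3$ and $\ell r_1r_2r_3q$), giving $\psi_4=\psi_5=1$. Finally $D(a,b)$ has diameter $4$, so it has no path on $\ge 6$ vertices and $\psi_k=0$ for all $k\ge 6$.

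An alternative way to organise the passage from $n=7$ to larger $n$, which I would also mention, uses Lemma~\ref{lem_v}: the vertex $r_3$ lies in a minimum $k$-PVC of $D(a,b)$ for every $k\in\{2,3,4,5\}$ (it belongs to $\{r_1,r_3\}$ for $k\in\{2,3\}$, and to the singleton minimum $k$-PVC $\{r_3\}$ for $k\in\{4,5\}$, by the structural fact above). A quick application of Lemma~\ref{lem_v} shows that attaching a new pendant leaf at such a vertex neither creates a path on $\ge 6$ vertices nor changes any of $\psi_1,\dots,\psi_5$ beyond the trivial increment of $\psi_1$; hence $D(1,n-4)$ and $D(2,n-5)$ arise from $D(1,3)$ and $D(2,2)$ by repeatedly attaching leaves at $r_3$, and the only case needing genuine verification is $n=7$.

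The main obstacle is the (easy but essential) bookkeeping in choosing where to grow the trees. First, one may only attach new leaves at an ``outer'' vertex that lies in all of the relevant minimum PVCs: attaching a leaf at $r_2$, or at a pendant vertex, would raise $\psi_2$ to $3$ and wreck the construction. Second, one must simultaneously keep the two unordered leaf-count pairs distinct so that $T_1\not\simeq T_2$ survives for every $n$; naively enlarging $D(1,3)$ and $D(2,2)$ ``in the same way'' at $r_1$ would already make both trees equal to $D(2,3)$ at $n=8$. Choosing $T_1=D(1,n-4)$ and $T_2=D(2,n-5)$ (equivalently, always growing at $r_3$) handles both points at once.
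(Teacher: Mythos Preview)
Your proof is correct and follows essentially the same route as the paper: exhibit two non-isomorphic $7$-vertex trees with path sequence $(7,2,2,1,1,0,0)$, then grow them by attaching pendant leaves at a suitable vertex so that the sequence becomes $(n,2,2,1,1,0,\ldots,0)$ for every larger $n$. Your family $D(a,b)$ packages this uniformly across all $n\ge 7$; the paper's two pictured base trees are precisely your $D(1,3)$ and $D(2,2)$ (these are the only $7$-vertex trees with $\psi_2=2$ and diameter $4$), and its extension step of attaching leaves at the marked vertex $w$ is exactly your ``grow at $r_3$'' operation.
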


By analysing the data for graphs with up to \maxN vertices (path sequences were calculated with the help of the computer program 
\cite{Prog}) we state the following conjecture concerning the existence of a Hamilton path in $G$ (that is clearly equivalent to the condition $\psi_n(G)=1$). According to our knowledge this conjecture has not been studied yet.

\begin {conj}\label{hip_2}
Let $G$ be a connected graph on $n\geq 2$ vertices. Then the following implication holds
$$\psi_{n-1}(G)=2 \Rightarrow \psi_n(G)=1$$
\end {conj}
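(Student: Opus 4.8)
The plan is to argue by contradiction: assume $G$ is connected on $n$ vertices, $\psi_{n-1}(G)=2$, but $\psi_n(G)\geq 2$, i.e.\ $G$ has no Hamilton path. Since $\psi_{n-1}(G)=2$, for every vertex $v$ the set $\{v\}$ is \emph{not} an $(n-1)$-PVC, so $G-v$ contains a path on $n-1$ vertices --- that is, a Hamilton path of $G-v$. Thus $G-v$ is traceable for every $v\in V$. At the same time, $\{v\}$ fails to be an $(n-1)$-PVC means that in fact $G$ itself contains a path $Q_v$ on $n-1$ vertices avoiding $v$; since $|Q_v|=n-1$ and $v$ is the only excluded vertex, $Q_v$ spans $G-v$. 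The first step is therefore to record: \emph{for every vertex $v$, $G-v$ has a Hamilton path, but $G$ has none.}

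The core of the argument is then a local analysis at an endpoint of such a spanning path. Fix a vertex $v$ and let $P = x_1 - x_2 - \cdots - x_{n-1}$ be a Hamilton path of $G-v$. I would like to conclude that $v$ together with $P$ can be rearranged into a Hamilton path of $G$, giving the contradiction. If $v$ is adjacent to $x_1$ or to $x_{n-1}$ we are immediately done, so assume $N(v)\subseteq\{x_2,\dots,x_{n-2}\}$. The standard rotation/extension technique applies: if $v \sim x_i$ then $v - x_i - x_{i-1} - \cdots - x_1$ is a path on $i+1$ vertices and $v - x_i - x_{i+1} - \cdots - x_{n-1}$ is a path on $n-i$ vertices; and for the rotation, if additionally $x_1 \sim x_{j}$ for some $j$, one obtains a new Hamilton path of $G-v$ with a different endpoint $x_{j-1}$. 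Iterating, let $E$ be the set of all vertices that occur as an endpoint of \emph{some} Hamilton path of $G-v$; a Pósa-type argument shows that $N(E)\setminus E$ together with structural constraints forces $E$ to be large, and in particular (using $\psi_{n-1}(G)=2$, hence $G - u$ traceable for all $u\in E$ as well) one should be able to locate an endpoint in $N(v)$ or to build a Hamilton path of $G$ directly. The plan is to push this until either $v$ attaches to an endpoint of some spanning path of $G-v$, or the rotations produce a Hamilton cycle in $G-v$, and in the latter case any neighbour of $v$ on that cycle yields a Hamilton path of $G$ (open the cycle at an edge incident to that neighbour).

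The main obstacle I anticipate is exactly the case where $G-v$ is \emph{hypotraceable-like} relative to $v$: every Hamilton path of $G-v$ has both endpoints outside $N(v)$, and rotations never reach $N(v)$ nor close up into a cycle. To rule this out I would exploit that the hypothesis is symmetric --- $G-u$ is traceable for \emph{every} $u$, not just for the chosen $v$ --- and combine spanning paths of $G-v$ with spanning paths of $G-x_1$ (where $x_1$ is a fixed endpoint) to transfer endpoints. One clean way to organise this: pick $v$ and an endpoint $w$ of a Hamilton path of $G-v$ so that the pair is chosen extremally (e.g.\ with $\{v,w\}$ at minimum ``distance'' along the path, or maximising the length of the longest $v$--$w$ path in $G$ that misses neither); then a shortest-path / longest-path exchange on the segment between a neighbour of $v$ and a neighbour of $w$ produces the Hamilton path of $G$. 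Verifying that the extremal choice forecloses every bad configuration is where the real work lies, and I expect it to require a short case split on whether the two relevant neighbours interleave along $P$ or not. If a purely combinatorial rotation argument proves too delicate, a fallback is to check the small cases ($n\le 9$) against the computed data of \cite{Prog} and handle $n\ge 10$ with the extremal argument, where more room makes the endpoint-transfer step go through cleanly.
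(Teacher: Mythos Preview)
First, a framing issue: the paper does \emph{not} prove Conjecture~\ref{hip_2}. It is stated as a conjecture, checked computationally for $n\le 9$, and only the special case $n\le 7$ is established by a direct case analysis (Lemma~\ref{lem_3}, Corollary~\ref{cor_3}, and the subsequent theorem). There is therefore no ``paper's own proof'' of the general statement to compare against.

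Second, there is a small slip at the start of your argument: you write ``$\psi_n(G)\ge 2$, i.e.\ $G$ has no Hamilton path''. Since $\psi_n(G)\le n-n+1=1$ always, the negation of $\psi_n(G)=1$ is $\psi_n(G)=0$, which is indeed the condition that $G$ has no Hamilton path. This is cosmetic, but worth fixing.

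The substantive problem is that your reformulation, while correct, shows the conjecture is false rather than true. You correctly deduce that $\psi_{n-1}(G)=2$ forces $G-v$ to be traceable for every vertex $v$, and the assumption $\psi_n(G)=0$ says $G$ itself is not traceable. A connected graph with exactly these two properties is, by definition, a \emph{hypotraceable} graph. Such graphs are known to exist: Thomassen (Discrete Math.\ 9 (1974), 91--96) constructed infinite families, and the smallest known example has $34$ vertices. Any hypotraceable graph $G$ on $n$ vertices satisfies $\psi_{n-1}(G)=2$ and $\psi_n(G)=0$, and is thus a counterexample to Conjecture~\ref{hip_2}.

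Consequently, the rotation--extension / P\'osa-type programme you outline cannot be completed: the ``main obstacle'' you anticipate --- that every Hamilton path of $G-v$ has both endpoints outside $N(v)$ and rotations never reach $N(v)$ nor close into a cycle --- is not a difficulty to be overcome but an accurate description of what actually happens in a hypotraceable graph. Your proposed fallback (verify $n\le 9$ by computer, handle $n\ge 10$ by an extremal endpoint-transfer argument) fails for the same reason once $n\ge 34$. The paper's restriction to $n\le 7$ is thus not an artefact of the method; the general statement is simply not true.
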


By setting $k=n-1$ in the following remark, one observes that it is not necessary to formulate Conjecture \ref{hip_2} for disconnected graphs.

\begin{rem}\label{rem_hip2}
If $G$ is a graph on $n$ vertices such that $\psi_k(G)=n-k+1$ for some $2\leq k\leq n$, then $G$ is connected.
\begin{proof}
Let $n\geq 2$ and let $H_i$, $i\in I$ be the connected components of $G$. Since $\psi_k(H_i)\leq |H_i|-k+1$, 
we get $\psi_k(G)=\sum_{i\in I}\psi_k(H_i)\leq\sum_{i\in I}(|H_i|-k+1)=n-(k-1)\cdot|I|$. Now if $\psi_k(G)=n-k+1$, then $n-k+1\leq n-(k-1)\cdot|I|$. Since $k\geq 2$, equality is possible only for $|I|=1$, i.e. when $G$ is connected.
\end{proof}
\end{rem}

It is straightforward to see that if $G$ is a graph on $n$ vertices and $\psi_2(G)$ is maximum possible (i.e. $n-1$), then $G$ is necessarily isomorphic to $K_n$. It is not the case for $\psi_k(G)$ and $k>2$, however Conjecture \ref{hip_2} implies the following interesting property of path sequences.

\begin{thm}\label{ciekawa_wlasnosc}
Let $G$ be a graph on $n\geq 3$ vertices and $2\leq k<n$.
If Conjecture \ref{hip_2} holds for all connected graphs with at most $n$ vertices, then $\psi_k(G)=n-k+1$ implies $\psi_j(G)=n-j+1$ for all $j$ such that $k<j\leq n$.
\end{thm}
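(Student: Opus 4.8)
The plan is to translate the extremal equality $\psi_j(G)=n-j+1$ into a statement about all $j$-vertex induced subgraphs of $G$, and then push it upward one value of $j$ at a time using Conjecture \ref{hip_2}. First I would record the following equivalence, valid for every $j$ with $2\le j\le n$: a set $S$ of $n-j$ vertices is a $j$-PVC of $G$ precisely when $G[V\setminus S]$, a graph on exactly $j$ vertices, contains no path on $j$ vertices, i.e.\ has no Hamilton path. Since any $j$-PVC of size smaller than $n-j$ can be enlarged to one of size exactly $n-j$, this gives
\[
\psi_j(G)=n-j+1\iff\text{every $j$-vertex induced subgraph of $G$ has a Hamilton path.}
\]
(The forward direction uses that $\psi_j(G)\le n-j$ would produce such an $S$; the backward direction is the contrapositive.)

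Next I would argue by induction on $j$, running from $j=k$ up to $j=n$, with inductive statement ``every $j$-vertex induced subgraph of $G$ has a Hamilton path''. The base case $j=k$ is exactly the hypothesis $\psi_k(G)=n-k+1$ read through the equivalence above. For the inductive step, fix $j$ with $k\le j<n$ and let $G'=G[W]$ be an arbitrary induced subgraph of $G$ with $|W|=j+1$. Every $j$-vertex induced subgraph of $G'$ is also a $j$-vertex induced subgraph of $G$, hence has a Hamilton path by the inductive hypothesis; applying the equivalence to $G'$ itself yields $\psi_j(G')=(j+1)-j+1=2$. Because $j\ge k\ge 2$ and $\psi_j(G')=|W|-j+1$, Remark \ref{rem_hip2} forces $G'$ to be connected. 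Now $j=|W|-1$, so $\psi_{|W|-1}(G')=2$, and since $|W|=j+1\le n$ the assumed form of Conjecture \ref{hip_2} applies to $G'$ and gives $\psi_{|W|}(G')=1$, i.e.\ $G'$ has a Hamilton path. As $G'$ was arbitrary, every $(j+1)$-vertex induced subgraph of $G$ has a Hamilton path, which by the equivalence is $\psi_{j+1}(G)=n-(j+1)+1$; this closes the induction and simultaneously records the desired value.

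Carrying the induction through to $j=n$ then yields $\psi_j(G)=n-j+1$ for all $k\le j\le n$, which is the assertion (the case $j=k$ being the hypothesis). The only substantive step is the opening equivalence; after that the argument is essentially mechanical, and the two points that need a moment's care are that each auxiliary graph $G'$ is genuinely connected (supplied by Remark \ref{rem_hip2}) and has at most $n$ vertices, so that the hypothesis ``Conjecture \ref{hip_2} holds for all connected graphs with at most $n$ vertices'' is actually invoked in the permitted range. I do not expect a serious obstacle beyond bookkeeping these side conditions.
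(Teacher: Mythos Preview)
Your proof is correct and takes a genuinely different route from the paper's. The paper proves the one-step implication $\psi_m(G)=n-m+1\Rightarrow\psi_{m+1}(G)=n-m$ by induction on $n$: it picks a vertex $v$ in a minimum $(m+1)$-PVC, passes to $G-v$, and uses Lemma~\ref{lem_v} to track both $\psi_m$ and $\psi_{m+1}$ under vertex deletion, with a separate argument when that PVC is empty. You instead recast the extremal equality $\psi_j(G)=n-j+1$ as ``every $j$-vertex induced subgraph of $G$ is traceable'' and induct on $j$; each $(j{+}1)$-vertex induced subgraph $G'$ then has $\psi_j(G')=2$, is connected by Remark~\ref{rem_hip2}, and becomes traceable by Conjecture~\ref{hip_2}. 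Your argument is cleaner---it avoids Lemma~\ref{lem_v} and the case split entirely and makes transparent \emph{why} a statement about Hamilton paths controls the whole tail of the path sequence---while the paper's proof stays closer to the toolkit developed earlier in Section~\ref{secEleme}. Both invoke Remark~\ref{rem_hip2} and Conjecture~\ref{hip_2} in the same places and for the same reasons.
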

\begin{proof}
It is enough to prove the following claim for all graphs $G$ on $n\geq 3$ vertices and all $m$ such that $2\leq m<n$: if Conjecture \ref{hip_2} holds for all connected graphs with at most $n$ vertices, then $\psi_{m}(G)=n-m+1$ implies $\psi_{m+1}(G)=n-m$.

We proceed by induction on $n$. It easy to check the claim for $n=3$.
So assume that $n\geq 4$ and fix $m \in \{2,3,\ldots,n-1\}$. Let $G$ be a graph on $n$ vertices such that $\psi_m(G)=n-m+1$ and $\psi_{m+1}(G)=n-m-t$, with some $t\geq 0$. We need to prove that $t=0$. Observe that $G$ is connected by Remark \ref{rem_hip2} and if $m=n-1$, then the result follows by the validity of Conjecture \ref{hip_2}. So assume that $m<n-1$ and put $S$ to be a minimum $(m+1)$-PVC for $G$. There are two cases to consider:
\begin {enumerate}[(a)]
\item $S \neq \emptyset$. Choose $v \in S$ and let $G'=G-v$. Lemma \ref{lem_v} gives $n-m \leq \psi_m(G') \leq (n-1)-m+1$ and therefore $\psi_m(G')=n-m$. By the induction hypothesis $\psi_{m+1}(G')=n-m-1$, but due to Lemma \ref{lem_v} we obtain that $\psi_{m+1}(G')=\psi_{m+1}(G)-1=n-m-t-1$, so $t=0$.
\item $S=\emptyset$. This case cannot occur and the proof is as follows: let $w$ be any vertex of $G$,
put $G'=G-w$ and observe that by Lemma \ref{lem_v} we get $\psi_{m}(G')=n-m$, so by the induction hypothesis $\psi_{m+1}(G')=n-m-1$. But $0\leq \psi_{m+1}(G')\leq \psi_{m+1}(G)=0$ and consequently $n-m-1=0$, which is impossible since $m<n-1$.
\qedhere
\end {enumerate}
\end{proof}

We now present a lemma which will be useful in giving a direct proof of Conjecture \ref{hip_2} for graphs with no more than $7$ vertices. Note that the points (2)-(4) follow from Lemma 2.2 of \cite{BFSV}, however we include them here with a proof.

\begin {lem} \label{lem_3}
 Let $G=(V,E)$ be a connected graph on at least four vertices such that $V=\{p_1,\ldots,p_{n-1},q\}$, $N(q)=\{p_{i_1},\ldots,p_{i_t}\}$ and $p_1-\ldots-p_{n-1}$ is a path in $G$. If 
$\psi_{n-1}(G)=2$ and $\psi_n(G)=0$, then
\begin {enumerate}[(1)]
\item $d(p_1)\geq 2$ and $d(p_{n-1}) \geq 2$.
\item\label{lem_3_1} $p_1p_{n-1}\notin E$, $qp_1\notin E$, $qp_{n-1}\notin E$ and if $qp_i \in E$, then $qp_{i+1} \notin E$ for all $i\in\{2,\ldots,n-2\}$.
\item $p_1p_{i_j+1}\notin E$ and  $p_{n-1}p_{i_j-1} \notin E$ for all $j\in\{1,\ldots,t\}$.
\item $p_1p_{i_j-1} \notin E$, for all $j$ such that $i_j>\min\{i_1,\ldots,i_t\}$ and $p_{n-1}p_{i_j+1} \notin E$ for all $j$ such that $i_j<\max\{i_1,\ldots,i_t\}$.
\end {enumerate}
\end {lem}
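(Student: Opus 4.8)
The plan is to exploit two structural facts repeatedly: first, $\psi_{n-1}(G)=2$ means that no single vertex is an $(n-1)$-path vertex cover, i.e. for every vertex $v$ the graph $G-v$ still contains a path on $n-1$ vertices (equivalently, a Hamilton path of $G-v$); second, $\psi_n(G)=0$ means $G$ itself has no Hamilton path. I would phrase the whole argument as: \emph{if any of the forbidden configurations occurred, then either $G$ would have a Hamilton path (contradicting $\psi_n(G)=0$), or some single vertex would hit every $(n-1)$-vertex path (contradicting $\psi_{n-1}(G)=2$)}. The fixed Hamilton path $p_1-\ldots-p_{n-1}$ of $G-q$ is the main gadget; all the forbidden edges in (2)--(4) are exactly those that would let us either close this path through $q$ into a Hamilton path of $G$, or reroute it so that $q$ becomes an endpoint, making $\{q\}$ useless as a cover.

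For (1): if $d(p_1)=1$, then $p_1$ lies on no path of $G$ except as an endpoint of paths using the edge $p_1p_2$; I would argue that then $G-p_2$ has no path on $n-1$ vertices (removing $p_2$ isolates $p_1$ together with possibly $q$, leaving too few vertices to span $n-1$), so $\{p_2\}$ is an $(n-1)$-PVC and $\psi_{n-1}(G)\le 1$, a contradiction. Actually I should be slightly careful: $q$ may be adjacent to $p_1$, so I first need the part of (2) saying $qp_1\notin E$ — hence I would prove the pieces of (2) before (1), or interleave them. For the $qp_1\notin E$ claim: if $qp_1\in E$ then $q-p_1-p_2-\ldots-p_{n-1}$ is a Hamilton path of $G$, contradicting $\psi_n(G)=0$; symmetrically for $qp_{n-1}$. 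For $p_1p_{n-1}\in E$: then $p_1-p_2-\ldots-p_{n-1}-p_1$ is a Hamilton cycle of $G-q$, so for \emph{any} neighbour $p_i$ of $q$ we can open the cycle at an edge incident to $p_i$ and append $q$, giving a Hamilton path of $G$ — contradiction. For $qp_i\in E$ and $qp_{i+1}\in E$ simultaneously: then $p_1-\ldots-p_i-q-p_{i+1}-\ldots-p_{n-1}$ is a Hamilton path of $G$ — contradiction. With (2) in hand, (1) follows: e.g. $d(p_1)=1$ forces $p_1$'s only neighbour to be $p_2$ (since $p_1p_j$ for $j\ge3$ would reroute the path, and $qp_1\notin E$), and then $G-p_2$ cannot contain an $(n-1)$-vertex path, contradicting $\psi_{n-1}(G)=2$.

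For (3) and (4) the mechanism is the same "reroute to make $q$ an endpoint" idea, which is the genuinely delicate part. Suppose $p_1p_{i_j+1}\in E$ where $q\sim p_{i_j}$. Then consider the path $q-p_{i_j}-p_{i_j-1}-\ldots-p_1-p_{i_j+1}-p_{i_j+2}-\ldots-p_{n-1}$: this is a path on all $n$ vertices (it traverses $p_1,\dots,p_{i_j}$ in reverse from $p_{i_j}$, jumps via the edge $p_1p_{i_j+1}$ to the tail $p_{i_j+1},\dots,p_{n-1}$, and has $q$ dangling at the front) — contradicting $\psi_n(G)=0$; the case $p_{n-1}p_{i_j-1}\in E$ is symmetric. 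For (4), if $i_j>\min_\ell i_\ell$, pick $i_s=\min_\ell i_\ell$ with $s\neq j$; the forbidden edge $p_1p_{i_j-1}$ together with $q\sim p_{i_s}$ and $q\sim p_{i_j}$ should let me build a Hamilton path of $G$ of the form (tail from $p_{n-1}$ down to $p_{i_j}$) $-q-$ (segment from $p_{i_s}$ rerouted using $p_1p_{i_j-1}$ to sweep up $p_1,\ldots,p_{i_j-1}$ and the initial segment $p_{i_s+1},\dots$), so that $q$ sits in the interior joining two arcs that between them cover $\{p_1,\ldots,p_{n-1}\}$. Getting the indices and the two arcs to partition $\{1,\dots,n-1\}$ correctly — and checking that the relevant edges ($p_1p_{i_j-1}$, the path edges, $qp_{i_s}$, $qp_{i_j}$) are all present and that no vertex is repeated — is the main bookkeeping obstacle, and is where I expect to spend the most care; a clean diagram of the three arcs $p_1\cdots p_{i_s}$, $p_{i_s}\cdots p_{i_j-1}$ (or $p_{i_s+1}\cdots p_{i_j-1}$), and $p_{i_j}\cdots p_{n-1}$, together with the two "jump" edges $p_1p_{i_j-1}$ and the $q$-edges, should make the construction transparent once the correct ordering is pinned down.
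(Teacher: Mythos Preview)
Your approach is correct and matches the paper's: each forbidden configuration in (2)--(4) is eliminated by exhibiting an explicit Hamilton path of $G$ (your constructions for (3) and (4) are exactly the paper's, with $i_s=r=\min_\ell i_\ell$ in (4) giving the path $p_{n-1}-\cdots-p_{i_j}-q-p_r-\cdots-p_1-p_{i_j-1}-\cdots-p_{r+1}$), and (1) is handled by showing $\{p_2\}$ would be an $(n-1)$-PVC. One simplification: you do not need any part of (2) to prove (1), because the hypothesis $d(p_1)=1$ together with the path edge $p_1p_2$ already forces $p_2$ to be the \emph{unique} neighbour of $p_1$, so $p_1$ is isolated in $G-p_2$ and every path there has at most $n-2$ vertices --- your detour through ``$q$ may be adjacent to $p_1$'' is unnecessary.
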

\begin {proof}
To see (1) suppose to the contrary that $d(p_1)=1$.
We show that $S=\{p_2\}$ is a $(n-1)$-PVC for $G$. This follows from the fact that any path on $k$ vertices which avoids $p_2$ must also avoid 
$p_1$ and therefore $k\leq n-2$. The second case is proved analogously.

Now, the first part of (2) is obvious (in any case we get a path on all vertices in $G$, which contradicts $\psi_n(G)=0$). For the second, if such an $i$ exists, we have a path $p_1-\ldots-p_i-q-p_{i+1}-\ldots-p_{n-1}$.

As far as (3) is concerned, suppose first that $p_1p_{i_j+1} \in E$ for some $i_j$ such that $p_{i_j}\in N(q)$. 
Then we have the following path on $n$ vertices in $G$:
$q-p_{i_j}-p_{i_j-1}-\ldots-p_1-p_{i_j+1}-p_{i_j+2}-\ldots-p_{n-1}$.
If now $p_{n-1}p_{i_j-1} \in E$, then we get that the following path on $n$ vertices: 
$p_1-\ldots-p_{i_j-1}-p_{n-1}-\ldots-p_{i_j}-q$ exists.

To prove (4), let $r=\min\{i_1,\ldots,i_t\}$ and suppose that $p_1p_{i_j-1} \in E$. Then we have the following path on $n$ vertices in $G$:
$p_{n-1}-p_{n-2}-\ldots-p_{i_j}-q-p_{r}-p_{r-1}-\ldots-p_1-p_{i_j-1}-p_{i_j-2}-\ldots-p_{r+1}$. The second case follows by symmetry argument.
\end {proof}
 
\begin {cor}\label{cor_3}
 Let $G=(V,E)$ be a connected graph on at least four vertices such that $V=\{p_1,\ldots,p_{n-1},q\}$ and $p_1-\ldots-p_{n-1}$ is a path in $G$. If $\psi_{n-1}(G)=2$ and $\psi_n(G)=0$, then $$2 \leq d(q) \leq \sufit{\frac{n-3}{2}}$$
\end {cor}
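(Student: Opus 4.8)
The lower bound $d(q)\geq 2$ is immediate: if $d(q)\leq 1$ then removing $q$ (or using Lemma \ref{lem_3}(1)) shows $\psi_{n-1}(G)\leq 1$, contradicting the hypothesis, so in fact $d(q)\geq 2$ follows directly from part (1) of Lemma \ref{lem_3} applied after noting $q$ is not an endpoint of the path (Lemma \ref{lem_3}(\ref{lem_3_1}) gives $qp_1,qp_{n-1}\notin E$, hence $q$ plays the role of an ``internal'' vertex). For the upper bound, the plan is to exploit Lemma \ref{lem_3}(\ref{lem_3_1}): the neighbours of $q$ among $p_2,\dots,p_{n-2}$ form an independent set in the sense that no two consecutive indices $i,i+1$ can both be neighbours of $q$. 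So $N(q)\subseteq\{p_2,\dots,p_{n-2}\}$ and the index set $\{i_1,\dots,i_t\}$ contains no two consecutive integers inside $\{2,\dots,n-2\}$.

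The key step is then a counting argument: how many pairwise non-consecutive integers can one fit into $\{2,3,\dots,n-2\}$? This set has $n-3$ elements, and a subset with no two consecutive elements has size at most $\lceil (n-3)/2\rceil$. This gives $t=d(q)\leq\lceil\frac{n-3}{2}\rceil$ directly. I would state this combinatorial fact explicitly (it is the standard bound: a maximum independent set in a path on $\ell$ vertices has $\lceil \ell/2\rceil$ vertices, here $\ell=n-3$) and note that it follows purely from part (2) of Lemma \ref{lem_3} together with $qp_1,qp_{n-1}\notin E$; parts (3) and (4) are not needed for this corollary.

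I do not expect a genuine obstacle here — the corollary is essentially a repackaging of Lemma \ref{lem_3}(\ref{lem_3_1}). The only point requiring a little care is making sure the ``no two consecutive'' constraint is applied over the right index range: Lemma \ref{lem_3}(\ref{lem_3_1}) excludes $i=1$ and $i=n-1$ from $N(q)$ outright, and forbids consecutive pairs only for $i\in\{2,\dots,n-2\}$, which is exactly the range in which the surviving neighbours live, so the bound $\lceil(n-3)/2\rceil$ is clean. One should also double-check the boundary case $n=4$: then $\{2,\dots,n-2\}=\{2\}$, the bound reads $d(q)\leq\lceil 1/2\rceil=1$, but the lower bound demands $d(q)\geq 2$ — so in fact for $n=4$ the hypotheses $\psi_{n-1}(G)=2$, $\psi_n(G)=0$ are jointly unsatisfiable, and the corollary holds vacuously; I would remark on this (or silently note $n\geq 5$ is forced) to keep the statement honest.
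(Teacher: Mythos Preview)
Your upper-bound argument is exactly the paper's: Lemma \ref{lem_3}(\ref{lem_3_1}) gives $N(q)\subseteq\{p_2,\dots,p_{n-2}\}$ with no two consecutive indices, and the independent-set-in-a-path count yields $d(q)\le\lceil(n-3)/2\rceil$. Your remark on the vacuous case $n=4$ is also on point and is precisely how the paper uses the corollary afterwards.

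The lower bound, however, is phrased in a way that does not quite work. Lemma \ref{lem_3}(1) as stated concerns only $p_1$ and $p_{n-1}$, so it cannot be ``applied to $q$''; and ``removing $q$'' only bounds $\psi_{n-1}(G-q)$, which via Lemma \ref{lem_v} gives $\psi_{n-1}(G)\le\psi_{n-1}(G-q)+1\le 2$ --- no contradiction. What you want is the \emph{argument} behind (1), not its statement: if $d(q)=1$ with unique neighbour $u$, then $\{u\}$ is an $(n-1)$-PVC, because any path in $G$ avoiding $u$ must also avoid $q$ and hence lies in $n-2$ vertices. This is exactly the one-line proof the paper gives, so once you rephrase the lower bound this way your plan coincides with the paper's.
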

\begin {proof}
Firstly notice that $d(q) \geq 2$, since if $d(q)=1$ and $qu\in E$, then $S=\{u\}$ is a $(n-1)$-PVC. To see this note that 
any path $P$ in $G$ that avoids $u$ must also avoid $q$ and so $|P|<n-1$.
As for the second inequality, it follows from Lemma \ref{lem_3}.(\ref{lem_3_1}) since for every $i \in \{2,\ldots,n-2\}$ at most one of the edges $qp_i$, $qp_{i+1}$ is in $E$.
\end {proof}

The above facts allow us to give a direct proof of Conjecture \ref{hip_2} for graphs with no more than $7$ vertices. Our reasoning is "by considering cases" -- unfortunately we were unable to find a more general approach.

\begin {thm}
Let $G=(V,E)$ be a connected graph on $n$ vertices, with $2\leq n\leq 7$. Then Conjecture \ref{hip_2} holds for $G$, i.e.
$$\psi_{n-1}(G)=2 \Rightarrow \psi_n(G)=1$$
\end {thm}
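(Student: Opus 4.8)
The plan is to prove the contrapositive in a slightly sharpened form: assume $G$ is connected on $n\le 7$ vertices with $\psi_n(G)=0$ (no Hamilton path) and show $\psi_{n-1}(G)\ge 3$, or rather that $\psi_{n-1}(G)=2$ is impossible. Since $\psi_{n-1}(G)\le 2$ always (an arbitrary pair of vertices is an $(n-1)$-PVC because removing two vertices leaves at most $n-2<n-1$ vertices), and $\psi_{n-1}(G)=1$ would mean some single vertex meets every path on $n-1$ vertices — which, combined with $\psi_n(G)=0$, can be ruled out quickly — the real content is to exclude $\psi_{n-1}(G)=2$. So the whole theorem reduces to: \emph{there is no connected graph on $3\le n\le 7$ vertices with $\psi_{n-1}(G)=2$ and $\psi_n(G)=0$.}

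The next step is to set up the structural framework supplied by Lemma~\ref{lem_3} and Corollary~\ref{cor_3}. If $\psi_{n-1}(G)=2$ then $G$ certainly contains a path on $n-1$ vertices (otherwise a single vertex would be an $(n-1)$-PVC); fix one such path $p_1-\dots-p_{n-1}$, and let $q$ be the remaining vertex. Now Lemma~\ref{lem_3} tells us a great deal about where the edges at $q$ and at the endpoints $p_1,p_{n-1}$ can lie: $q$ is not adjacent to $p_1$ or $p_{n-1}$; $q$'s neighbours among $p_2,\dots,p_{n-2}$ form an "independent-type" set (no two consecutive indices); and $p_1,p_{n-1}$ each have degree $\ge 2$ but with forbidden neighbours dictated by $N(q)$. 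Corollary~\ref{cor_3} bounds $2\le d(q)\le\lceil(n-3)/2\rceil$. For $n\le 7$ this forces $d(q)\in\{2\}$ when $n\in\{5,6,7\}$ (for $n=7$, $\lceil 4/2\rceil=2$) and is vacuous/immediate for $n=3,4$, so $q$ has exactly two neighbours $p_a,p_b$ with $2\le a<b\le n-2$ and $b\ge a+2$. This is the key reduction: the "extra" vertex attaches in a very constrained way.

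From here the argument is genuinely casework, but small. For each $n\in\{3,4,5,6,7\}$ one enumerates the possible positions $(a,b)$ of $N(q)$ along the spine, then asks: which additional chords among the $p_i$ are allowed by parts (3) and (4) of Lemma~\ref{lem_3} (and by $p_1p_{n-1}\notin E$), and given those, is it really true that no pair of vertices is an $(n-1)$-PVC and that no Hamilton path exists? The strategy for each surviving configuration is to exhibit, for \emph{every} choice of two vertices $T=\{x,y\}$, a path on $n-1$ vertices avoiding $T$ — which directly contradicts $\psi_{n-1}(G)=2$ — typically by rerouting the spine through $q$ and/or an allowed chord. The main obstacle, and where I expect the real work to be, is the bookkeeping in the $n=7$ case: the spine is $p_1-\dots-p_6$ with $q$ adjacent to exactly two of $p_2,\dots,p_5$ that are non-consecutive (so $\{p_2,p_4\},\{p_2,p_5\},\{p_3,p_5\}$), and for each we must track the lattice of permissible chords and check that $\psi_6=2$ fails; one wants a uniform gadget — e.g.\ observing that the allowed chords are so sparse that deleting any two vertices still leaves a 5-vertex subpath — rather than a brute enumeration of the $\binom{7}{2}=21$ pairs.

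A cleaner way to organize the final step, which I would try first before grinding cases, is this: show that under the Lemma~\ref{lem_3} constraints with $n\le 7$, the graph $G$ is so close to the path $P_{n-1}$ plus a pendant-like $q$ that it actually \emph{has} a Hamilton path after all — i.e.\ derive a contradiction with $\psi_n(G)=0$ directly rather than with $\psi_{n-1}(G)=2$. Indeed, $d(p_1),d(p_{n-1})\ge 2$ forces chords at the ends, and parts (3)--(4) heavily restrict which chords those can be; for $n\le 7$ one should be able to argue that the only chord patterns consistent with \emph{both} endpoints having degree $\ge 2$ \emph{and} the $N(q)$-forbidden lists already admit a Hamilton path through $q$. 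Whichever contradiction one lands on, the proof is complete once all $(a,b)$-configurations for $n=3,\dots,7$ are dispatched; I would present $n=3,4$ in a sentence, $n=5,6$ in a short paragraph each, and devote the bulk of the write-up to the handful of $n=7$ configurations.
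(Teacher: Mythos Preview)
Your setup via Lemma~\ref{lem_3} and Corollary~\ref{cor_3} matches the paper's, but the final step --- the actual contradiction --- is aimed in the wrong direction, and neither of your two proposed strategies can succeed.

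First a minor slip: for $n=5$ one has $\lceil (n-3)/2\rceil=1$, so Corollary~\ref{cor_3} already gives $2\le d(q)\le 1$, an immediate contradiction. Thus $n=4,5$ are dispatched together; only $n=6,7$ require $d(q)=2$ and further analysis.

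The real problem is your endgame. You propose either (i) for every pair $T=\{x,y\}$ to exhibit an $(n-1)$-path avoiding $T$, forcing $\psi_{n-1}(G)\ge 3$; or (ii) to show that the Lemma~\ref{lem_3} constraints force a Hamilton path, contradicting $\psi_n(G)=0$. Both fail, and the $n=6$ case shows why concretely. With $N(q)=\{p_2,p_4\}$, Lemma~\ref{lem_3} forbids $p_1p_3,p_1p_5,p_1q,p_5p_3,p_5q,p_3q,p_1p_5$ and forces (via $d(p_1),d(p_5)\ge 2$) the chords $p_1p_4$ and $p_5p_2$. One checks directly that this graph has \emph{no} Hamilton path (both $q$ and $p_3$ have neighbour set contained in $\{p_2,p_4\}$, which blocks every attempt), so strategy (ii) cannot produce a contradiction. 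Worse, in $G-p_2$ every vertex other than $p_4$ is a leaf attached to $p_4$, so $G-p_2\simeq K_{1,4}$ contains no $P_5$; hence $\{p_2\}$ alone is a $5$-PVC and $\psi_5(G)\le 1$. In particular every pair containing $p_2$ \emph{is} a $5$-PVC, so strategy (i) is impossible as well. The same phenomenon occurs in both $n=7$ configurations.

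The paper's punchline is precisely the observation you are missing: the contradiction comes from $\psi_{n-1}(G)\le 1$, not $\ge 3$. In each surviving case one shows that $\{p_2\}$ is an $(n-1)$-PVC, by arguing that in $G-p_2$ two vertices (namely $q$ and $p_1$) become pendants with a common neighbour, which prevents any $(n-1)$-path. So keep your framework, but replace the ``find an $(n-1)$-path avoiding every pair'' plan with ``find a single vertex whose removal kills all $(n-1)$-paths''.
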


\begin {proof}
The theorem holds true for $n=2$ and $n=3$, with complete graphs $K_2$ and $K_3$ being the only cases to verify. So we can assume that $n\geq 4$.

It is sufficient to prove that the existence of a connected graph that satisfies $\psi_{n-1}(G)=2$ and $\psi_n(G)=0$ leads to a contradiction. Throughout we consider a graph $G=(V,E)$ with $V=\{p_1,\ldots,p_{n-1},q\}$ and assume that $p_1-\ldots-p_{n-1}$ is a path in $G$.

Notice that if $n=4$ or $n=5$, then $\sufit{\frac{n-3}{2}}<2$ and Corollary \ref{cor_3} gives $2\leq d(q)<2$, a contradiction. So we can assume that $n=6$ or $n=7$. Note that in both cases we get $d(q)=2$ by Corollary \ref{cor_3}.

Now let $G$ be a connected graph with $6$ vertices such that $\psi_5(G)=2$, $\psi_6(G)=0$ and $d(q)=2$. Due to Lemma \ref{lem_3} 
we only need to consider the case when $qp_2,qp_4 \in E$ - but then the set $S_5=\{p_2\}$ is a $5$-PVC. To see this let us assume 
that there exists a path $P$ on 5 vertices $p_1,p_3,p_4,p_5,q$ (in any order). Using again Lemma \ref{lem_3} we obtain that $d(p_1)=2$,$p_1p_4 \in E$ and $p_3p_5 \notin E$.
It follows that $p_2p_3$ and $p_3p_4$ are the only two edges containing $p_3$. Consequently $P$ must start with $p_3$ and it is easy to see that we cannot build a path avoiding $p_2$ longer than $p_3-p_4-x$ where $x \in \{q,p_1,p_5\}$, which contradicts $P$ having 5 vertices.

Let us now assume that $n=7$ and there exists a graph $G$ such that $\psi_6(G)=2$ and $\psi_7(G)=0$ with $d(q)=2$. Because of Lemma \ref{lem_3} and symmetries we only need to consider two cases:  $qp_2,qp_5 \in E$ and $\; qp_2, qp_4 \in E$. Assume the first case - by Lemma \ref{lem_3} we get that $d(p_1)=2$ and $p_1p_5 \in E$. But now $S_6=\{p_2\}$ is a $6$-PVC for $G$: this follows from the fact that there is no path on $6$ vertices which avoids $p_2$ in $G$. Indeed, if such a path exists, then it is of the form $q-p_5-x_1-x_2-x_3-x_4$ where $x_i \neq p_1$ for $i=1,2,3,4$ -- a contradiction.

Let us now proceed with the case $qp_2, qp_4 \in E$. By Lemma \ref{lem_3} we must have that $d(p_1)=2$ and $p_1p_4 \in E$. But then again $S_6=\{p_2\}$ is a $6$-PVC for $G$. To see this, notice that we only need to consider paths of the form $q-p_4-x_1-x_2-x_3-x_4$. If such a path omits $p_2$, then we cannot have $x_1=p_1$ and so we get $x_i \in \{p_3,p_5,p_6\}$ for $i=1,2,3,4$ -- a contradiction.
\end {proof}

As a consequence of the above and Theorem \ref{ciekawa_wlasnosc} we get

\begin{cor}
Let $G$ be a graph on $n$ vertices with $3\leq n\leq 7$ and let $2\leq k<n$. If $\psi_k(G)=n-k+1$, then $\psi_j(G)=n-j+1$ for all $j$ such that $k<j\leq n$.
\end{cor}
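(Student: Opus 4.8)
The plan is simply to combine the two results that immediately precede this corollary. Theorem \ref{ciekawa_wlasnosc} already does the real work: for a graph $G$ on $n\geq 3$ vertices and $2\leq k<n$, the hypothesis ``Conjecture \ref{hip_2} holds for all connected graphs with at most $n$ vertices'' forces $\psi_k(G)=n-k+1\Rightarrow\psi_j(G)=n-j+1$ for every $j$ with $k<j\leq n$. So the only thing left to verify is that this hypothesis is actually satisfied throughout the range $3\leq n\leq 7$.

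First I would note that, since $n\leq 7$, every connected graph on at most $n$ vertices has at most $7$ vertices, so the theorem proved just above — namely that Conjecture \ref{hip_2} holds for every connected graph on $m$ vertices with $2\leq m\leq 7$ — guarantees that $\psi_{m-1}(G')=2\Rightarrow\psi_m(G')=1$ for each such $G'$. Hence the hypothesis of Theorem \ref{ciekawa_wlasnosc} is met for the value $n$ at hand. Then I would invoke Theorem \ref{ciekawa_wlasnosc} directly with this $n$ and the given $k$: from $\psi_k(G)=n-k+1$ we conclude $\psi_j(G)=n-j+1$ for all $j$ with $k<j\leq n$, which is exactly the assertion. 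One need not separately assume $G$ connected, since by Remark \ref{rem_hip2} the equality $\psi_k(G)=n-k+1$ with $k\geq 2$ already implies connectedness, and in any case this point is absorbed inside the proof of Theorem \ref{ciekawa_wlasnosc}.

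There is no genuine obstacle here: the corollary is a bookkeeping consequence of the fact that the range $n\leq 7$ coincides with the range in which Conjecture \ref{hip_2} has now been verified unconditionally. All of the substantive content lies in Theorem \ref{ciekawa_wlasnosc} (the conditional reduction) and in the case analysis of the preceding theorem (the verification of Conjecture \ref{hip_2} for $n\leq 7$), so the proof of the corollary itself is a one-line deduction.
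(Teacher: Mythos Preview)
Your proposal is correct and follows exactly the paper's own approach: the corollary is stated immediately after the verification of Conjecture~\ref{hip_2} for $n\leq 7$ and is introduced simply as ``a consequence of the above and Theorem~\ref{ciekawa_wlasnosc}''. Your write-up just makes explicit the one-line deduction the paper leaves implicit.
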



\begin{thebibliography}{1}
\bibitem{Bollobas} Bollob\'{a}s, B., \emph{Modern Graph Theory}, GTM 184, Springer Verlag, New York, 2002.
\bibitem{BKKS} Bre\v{s}ar, B.; Kardo\v{s}, F.; Katreni\v{c}, J.; Semani\v{s}in, G., \emph{Minimum k-path vertex cover}, Discrete Appl. Math. 159 (2011), no. 12, 1189-1195.
\bibitem{BJKST} Bre\v{s}ar, B.; Jakovac, M.; Katreni\v{c}, J.; Semani\v{s}in, G.; Taranenko, A., \emph{On the vertex k-path cover}, Discrete Appl. Math. 161 (2013),  no. 13-14, 1943-1949.
\bibitem{BFSV} Bullock, F.; Frick M.; Semani\v{s}in, G.; Vla\v{c}uha, R., \emph{Nontraceable detour graphs}, Discrete Math. 307 (2007), no. 7-8, 839-853.
\bibitem{Jakovac} Jakovac, M., \emph{The k-path vertex cover of rooted product graphs}, Discrete Appl. Math. 187 (2015), 111-119.
\bibitem{KKS} Kardo\v{s}, F.; Katreni\v{c}, J.; Schiermeyer, I., \emph{On computing the minimum 3-path vertex cover and dissociation number of graphs}, Theor. Comp. Science 412 (2011), no. 50, 7009-7017.
\bibitem{Math} Wolfram Research, Inc., Mathematica, Version 10.0, Champaign, IL (2014).
\bibitem{McKay} McKay B., \emph{Home page at the Research School of Computer Science, Australian National University}, https://cs.anu.edu.au/people/Brendan.McKay/data/graphs.html, accessed July 22, 2015.
\bibitem{Prog} http://www.ii.uj.edu.pl/{\textasciitilde}zygadlo/publikacje.html
\end{thebibliography}
\end{document}